\def\R {\mathbb{R}}
\def\N {\mathbb{N}}
\def\d{{\rm d}}
\def \and {{\qquad\text{and}\qquad}}
\newtheorem{proposition}{Proposition}[section]
\newtheorem{theorem}[proposition]{Theorem}
\newtheorem{lemma}[proposition]{Lemma}
\theoremstyle{definition}
\newtheorem{remark}[proposition]{Remark}
\numberwithin{equation}{section}
\def \no#1#2#3 {{\bf #1} (#3), #2.}
\def \eds#1#2#3 {#1, #2, #3.}
\title[Stabilizability and heat equation]
{Characterizations of stabilizable sets  \\for some parabolic  equations in $\R^n$}
\author{Shanlin Huang \quad Gengsheng Wang \quad   Ming Wang}
\address{Shanlin Huang,  School of Mathematics and Statistics, Hubei Key Laboratory of Engineering Modeling and Scientific Computing, Huazhong University of Science and Technology,  Wuhan,  430074,  P.R. China}
\email{shanlin\_huang@hust.edu.cn}
\address{Gengsheng Wang,  Center for Applied Mathematics, Tianjin University, Tianjin 300072, P.R. China}
\email{wanggs62@yeah.net}
\address{Ming Wang,  School of Mathematics and Physics, China University of Geosciences, Wuhan 430074,  P.R. China}
\email{mwang@cug.edu.cn}
\subjclass[2010]{93D20, 93B07.}
\keywords{Stabilizabilty, stabilizable sets, spectral inequalities,
parabolic equations}
\begin{document}

\begin{abstract}

We consider  the parabolic  type equation in $\mathbb{R}^n$:
\begin{align}\label{equ-0}
(\partial_t+H)y(t,x)=0,\,\,\, (t,x)\in (0,\infty)\times \R^n;\;\; \quad y(0,x)\in L^2(\R^n),
\end{align}
where $H$  can be one of the following operators:
$(\romannumeral1)$ a shifted fractional   Laplacian; $(\romannumeral2)$
a shifted Hermite operator; $(\romannumeral3)$ the Schr\"{o}dinger
operator with some  general potentials.
We call a subset $E\subset \mathbb{R}^n$ as a stabilizable set for \eqref{equ-0}, if
there is a  linear bounded operator $K$ on $L^2(\mathbb{R}^n)$
so that the semigroup
$\{e^{-t(H-\chi_EK)}\}_{t\geq 0}$ is exponentially stable.
(Here, $\chi_E$ denotes
the characteristic function of $E$, which is treated
as a linear operator on $L^2(\R^n)$.)

This paper presents different geometric characterizations
of  the stabilizable sets for \eqref{equ-0} with different $H$.
In particular,  when $H$ is a shifted fractional Laplacian,
$E\subset \mathbb{R}^n$ is a stabilizable set for \eqref{equ-0}
 if and only if $E\subset \mathbb{R}^n$ is a thick set, while when $H$ is a
 shifted Hermite operator, $E\subset \mathbb{R}^n$ is
 a stabilizable set for \eqref{equ-0}
 if and only if $E\subset \mathbb{R}^n$ is a set of positive measure.
Our results, together with the results on the observable sets for \eqref{equ-0} obtained in \cite{AB,Ko,Li,M09},
 reveal such phenomena: for some $H$,
the class of  stabilizable sets
contains strictly the class of observable sets,
 while for some other $H$, the classes of
stabilizable sets and observable sets coincide.
Besides, this paper  gives some sufficient conditions on
the
stabilizable sets for \eqref{equ-0} where $H$ is
 the Schr\"{o}dinger operator with some general potentials.

\end{abstract}

\maketitle


\section{Introduction}\label{section1}

\subsection{Notation}\label{notation}
Let $\N:=\{0,1,2,\dots\}$ and let $\mathbb{N}^+:=\{1,2,\dots\}$.
Write $C(\cdots)$ for a positive constant that depends
on what are enclosed in the brackets.
 Use $\|\cdot\|_{\mathcal{L}(L^2(\R^n))}$
to denote the operator norm on $L^2(\R^n)$.
Use respectively $\|\cdot\|_{L^2(\R^n)}$ and
$(\cdot,\cdot)$ to denote the usual norm and the usual inner product in  $L^2(\R^n)$.
Write respectively $|\cdot|$ and  $\langle\cdot,\cdot\rangle_{\R^n}$ for
the usual norm and the usual inner product  in $\R^n$. Given $x\in \R$,
write $[x]$ for the integer part of $x$.
Given a subset $E\subset\R^n$, write $|E|$ for its Lebesgue measure in $\R^n$
(if it is measurable); write $\chi_E$ for its characteristic function.
Given $L>0$ and $x\in \R^n$, write $Q_L(x)$ for the closed cube (in $\mathbb{R}^n$)
centered at $x$ and of  side-length $L$; write $B(x,L)$
for the closed ball (in $\mathbb{R}^n$) centered at $x$
and of  radius $L$, while use $B^c(x,L)$ to
denote the complement  of $B(x,L)$ in $\R^n$.
Given a function $V$ over $\mathbb{R}^n$, write
$V_{-}(x) := \max\{-V(x),0\}$, $x\in \mathbb{R}^n$.
Given a polynomial $P$, write $\deg P$ for its degree.
Given a linear operator $H$ on a Hilbert space, we write $\sigma(H)$
 for the spectrum  of $H$.
Use $\widehat {\cdot}$ and $\mathcal{F}^{-1}$ to denote
the Fourier transform and its inverse, respectively.
 Write  $(-\Delta)^{\frac{s}{2}}$  (with $s>0$)   for
 the fractional Laplacian  defined  by
$$
(-\Delta)^{\frac{s}{2}}\varphi:=\mathcal{F}^{-1}(|\xi|^{s}\widehat {\varphi}(\xi)),
\,\,\,\varphi\in C_0^\infty(\R^n).
$$

\subsection{Equation}\label{equation}
The subject of this paper is related to the stabilizabilty for the parabolic type equation in $\R^n$:
\begin{align}\label{equ-1}
(\partial_t+H)y(t,x)=0,\,\,\, (t,x)\in (0,\infty)\times \R^n,
\quad y(0,x)\in L^2(\R^n),
\end{align}
where the operator $H$ has one of the following forms:
\begin{itemize}
  \item [(i)] The first form is as:
  \begin{align}\label{equ-frac}
H=(-\Delta)^{\frac{s}{2}}-c,
\end{align}
where $s>0$ and $c\in\mathbb{R}$.
 \item [(ii)] The second form is as:
 \begin{align}\label{equ-herm}
H=-\Delta+|x|^2-c,
\end{align}
where $c\in \mathbb{R}$.

  \item [(iii)] The third form is as:
   \begin{align}\label{1.4,5-27w}
   H=-\Delta +V(x),
   \end{align}
   where the real-valued potential $V$ satisfies one of the following two conditions:

   \vskip 5pt

\noindent{\it Condition I} $\;$  The function $V$ is locally integrable
so that for some $\delta\in (0,1)$,
$$
\int_{\R^n}V_{-}(x)|\varphi|^2\d x\leq \delta \int_{\R^n}|\nabla \varphi|^2\d x,
\;\mbox{when}\; \varphi\in C_0^\infty(\R^n).
$$
\vskip 5pt
\noindent{\it Condition II} $\;$ The function $V$ is
locally bounded and  measurable  so that
$$
\lim_{|x|\to \infty} V(x) =\infty.
$$
\end{itemize}
Several notes on the above equation are given in order.

\begin{itemize}

\item [($\textbf{a}_1 $)] The form $H$ given by \eqref{1.4,5-27w} is a
generalization of  that given by  \eqref{equ-herm}. The reasons that
we consider them as two different
    cases are as follows: First, our results for \eqref{equ-1} with
    \eqref{equ-herm} are much more delicate than those
    for \eqref{equ-1} with \eqref{1.4,5-27w}; Second, our
     methods to study them are totally different.

  \item [($\textbf{a}_2$)]  We call \eqref{equ-1} with \eqref{equ-frac}
a  shifted fractional heat equation (a fractional heat equation, for short).
It is well known that
in the case  \eqref{equ-frac},
 $(-H)$ is self-adjoint and generates an analytic semigroup
  $\{e^{-tH}\}_{t\geq 0}$  satisfying
\begin{align}\label{1.9-2-28-w}
\left\|e^{-tH}\right\|_{\mathcal{L}(L^2(\R^n))}=e^{ct},\,\,\, \mbox{ when }\, t\geq 0.
\end{align}

\item [($\textbf{a}_3 $)] We call \eqref{equ-1} with \eqref{equ-herm}
a heat equation associated with a shifted Hermite operator.
It follows by \cite{Ti} that
    in the case \eqref{equ-herm},   $(-H)$
    is self-adjoint and generates an analytic semigroup $\{e^{-tH}\}_{t\geq 0}$  satisfying
 \begin{align}\label{1.10,5.28w}
\left\|e^{-tH}\right\|_{\mathcal{L}(L^2(\R^n))}=e^{(c-n)t},\,\,\, \mbox{when}\,\, t\ge 0.
\end{align}

\item [($\textbf{a}_4 $)] We call \eqref{equ-1} with \eqref{1.4,5-27w}
a heat equation with a potential. The Schr\"{o}dinger operator with potentials satisfying either
{\it Condition I} or {\it Condition II} is important and has been widely studied.
When $V$ satisfies {\it Condition I},  $(-H)$
 is self-adjoint  and generates an analytic semigroup (see,
e.g.,  \cite[Theorem \uppercase\expandafter{\romannumeral10}.17]{RS}).
 When $V$ satisfies {\it Condition II},
$(-H)$ is  self-adjoint, generates an analytic semigroup and has
a discrete spectrum (see \cite[Theorem 3.1]{BS}).

\item [($\textbf{a}_5 $)] Let $n\ge3$ and let
\begin{eqnarray*}
V(x):=-\Big(\frac{n-2}{2}\Big)^{2}\frac{\delta}{|x|^2},\;\;x\in \mathbb{R}^n,
\end{eqnarray*}
where $\delta\in (0,1)$. Then $V$
 satisfies  {\it Condition I}.
 This can be derived
 from the classical Hardy inequality:
$$
\int_{\mathbb{R}^{n}}|\nabla \varphi(x)|^{2} \d x
\geq\Big(\frac{n-2}{2}\Big)^{2} \int_{\mathbb{R}^{n}}
\frac{|\varphi(x)|^{2}}{|x|^{2}} \d x, \quad  \varphi\in C_0^\infty(\R^n ).
$$
For the above $V$,
the corresponding equation   \eqref{equ-1} with \eqref{1.4,5-27w}
is not exponentially stable\footnote{The equation \eqref{equ-1} is said to be exponentially
stable,
if there is $M>0$ and $\omega>0$ so that
$\|e^{-tH}\|_{\mathcal{L}(L^2(\R^n))}\leq Me^{-\omega t}$
for all $t\geq 0$.}.
Indeed, by \cite[Theorem 4.1]{BS}, we have
$$
\left\|e^{-tH}\right\|_{\mathcal{L}(L^2(\R^n))}=1\;\; \mbox{for each}\;\, t\geq 0.
$$

\item [($\textbf{a}_6 $)] Let $a>0$ and $c\in \mathbb{R}$, and let
\begin{eqnarray}\label{1.8-5-28wangw}
V(x):=|x|^a-c,\;\;x\in \mathbb{R}^n.
\end{eqnarray}
 Clearly, the above $V$   satisfies {\it Condition II}.
For this  $V$, when $c\geq \lambda_1$ (the first eigenvalue
of the  operator $-\Delta+|x|^a$),
the corresponding equation \eqref{equ-1} with \eqref{1.4,5-27w}
is not exponentially  stable.

\end{itemize}

\subsection{Concepts}\label{concept}
To give our main results, we need the following concepts
about the equation \eqref{equ-1}:
\begin{itemize}
\item [($\textbf{b}_1$)]
A measurable set $E\subset \mathbb{R}^n$ is called a
 {\it stabilizable set}  for \eqref{equ-1},
 if  there is a linear  bounded
operator  $K$ on $L^2(\R^n)$ so that
for some constants $M>0$ and $\omega>0$,
\begin{align}\label{equ-stable-1}
\left\|e^{-t(H-\chi_E K)}\right\|_{\mathcal{L}(L^2(\R^n))}\leq Me^{-\omega t}
\;\;\mbox{for all}\;\; t\geq 0.
\end{align}
 When $E\subset \mathbb{R}^n$ is  a
 {\it stabilizable set}  for \eqref{equ-1},
 we say that the equation \eqref{equ-1} is
 stabilizable over $E\subset \mathbb{R}^n$.

\item [($\textbf{b}_2$)] A measurable set
$E\subset \mathbb{R}^n$ is called a thick set, if
there is $\gamma>0$
 and $L>0$ so that
\begin{align}\label{equ-set-thick}
\left|E \bigcap Q_L(x)\right|\geq \gamma L^n\;\;\mbox{for each}
\;\;x\in \mathbb{R}^n.
\end{align}
\end{itemize}

Some notes on the above concepts are given in order.
  \begin{itemize}

\item [($\textbf{c}_1$)]
The concept of {\it stabilizable sets} seems to be new for us. It
links with the stabilizability and is
comparable to the concept of {\it observable sets} given in \cite{WWZZ},
i.e.,  a measurable set $E\subset \mathbb{R}^n$ is called an
  {\it observable set} for \eqref{equ-1},  if for every $T>0,$
 there is $C=C(E,T)>0$ so that when $y$ solves \eqref{equ-1},
\begin{align}\label{equ-ob}
 \|y(T,\cdot)\|_{L^2(\R^n)} \leq C\left(\int_{0}^T
 \int_{E}|y(t,x)|^2\d x\d t\right)^{1/2}.
\end{align}

\item [($\textbf{c}_2$)]
 There have been several common methods to study the stabilizability for
  control systems, such as LQ theory and
 Lyapunov functions (see, for instance, \cite{coron, LT, yashan, Z}).
 We would like to mention the recent work
  \cite{TWX} which gives
  a new characterization of the stabilizability  in terms of a weak
  observability inequality (see Theorem \ref{thm-txw} in the current paper).
  It plays an important role in our studies.

\item [($\textbf{c}_3$)] To our best knowledge, the concept of \emph{thick sets}  arose from studies of the uncertainty principle (see, for instance, \cite[p. 5]{BD}, or \cite[p. 113]{HJ})

    \end{itemize}

\subsection{Aim and motivation}\label{aim and motivation}
First, the concept of {\it stabilizable sets} connects with the stabilizability, while the concept
of {\it observable sets} links with the null controllability. Since the null controllability
implies the stabilizabilty (see, e.g., \cite[p. 227]{Z}), we see that if $E$ is an  observable set
for \eqref{equ-1}, then it is a {\it stabilizable set } for \eqref{equ-1}.
Second, the {\it observable sets} for heat equations have been studied
in, for instance,
\cite{AB,MZ01a,BJP,DWZ,MZ01b,MZ,Ko,LM19,Li,Mi05,Mi06,M09,Lu,DM,PW,AEWZ,EV18,trelatZ,WWZZ}.
Especially, according to \cite[Theorem 1.1]{WWZZ} (see also \cite{EV18}),
the {\it observable sets} for \eqref{equ-1}, where $H=-\Delta$,
are characterized by thick sets, while  according to \cite[Remark 1.13]{AB}, the {\it observable sets} for
\eqref{equ-1}, with \eqref{equ-frac}
where $s>1$ and $c\in\R$, are also characterized by thick sets.
 Thus, we naturally ask
the following question:
\vskip 5pt

\noindent{\it How to characterize  {\it stabilizable sets}
  for  \eqref{equ-1}?}

  \vskip 5pt

The answer for the above question may give
geometric characterizations of {\it stabilizable sets}
for \eqref{equ-1}.
Such characterizations, along with the characterizations
on observable sets obtained in \cite{AB,Ko,Li,M09},
may
give an explicit gap between the stabilizability and the null
controllability from the perspective of control regions.
The studies on the above problem may  lead us to a new subject about
   the stabilizability of PDEs.
To our best knowledge,  the above problem
 has not been touched upon.

The aim of this  paper is to answer  the above  question.

\subsection{Main results}\label{main results}
The first main result concerns \eqref{equ-1} with \eqref{equ-frac}.

\begin{theorem}\label{thm-frac-laplace}
Let $E\subset\R^n$. Then the following statements  are equivalent:
\begin{itemize}
  \item [(i)] The set $E$ is a thick set.
  \item [(ii)] The set $E$ is a {\it  stabilizable set}
  for \eqref{equ-1} with \eqref{equ-frac} where $s>0$ and
  $c\geq 0$.

\end{itemize}

\end{theorem}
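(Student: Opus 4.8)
The plan is to route both implications through the weak observability characterization of stabilizability recorded in Theorem \ref{thm-txw}, namely that $E$ is a stabilizable set for \eqref{equ-1} with \eqref{equ-frac} if and only if there exist $T>0$, $C>0$ and $\delta\in[0,1)$ with $\|e^{-TH}\varphi\|^2\le C\int_0^T\|\chi_E e^{-tH}\varphi\|^2\,\d t+\delta\|\varphi\|^2$ for all $\varphi\in L^2(\R^n)$, where $\|\cdot\|$ denotes the $L^2(\R^n)$-norm. Since $e^{-tH}$ is the Fourier multiplier with symbol $e^{-t(|\xi|^s-c)}$, it commutes with the projection $P_\rho$ onto $\{|\xi|\le\rho\}$ and maps bandlimited functions to bandlimited functions. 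The two ingredients I would combine are the Logvinenko--Sereda spectral inequality for thick sets (for $g$ with $\widehat g$ supported in $\{|\xi|\le\rho\}$ one has $\|g\|^2\le C_0 e^{2\gamma\rho}\|\chi_E g\|^2$) and the dissipativity $\|e^{-tH}(I-P_\rho)\varphi\|\le e^{-t(\rho^s-c)}\|\varphi\|$ of the high frequencies once $\rho^s>c$.

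For (i)$\Rightarrow$(ii), assume $E$ is thick and fix $\rho$ with $\rho^s>c$. Splitting $\varphi=P_\rho\varphi+(I-P_\rho)\varphi$ and using orthogonality of the two frequency blocks gives $\|e^{-TH}\varphi\|^2\le\|e^{-TH}P_\rho\varphi\|^2+e^{-2T(\rho^s-c)}\|\varphi\|^2$. Applying the spectral inequality at each time $t$ to the bandlimited function $e^{-tH}P_\rho\varphi$ and integrating over the \emph{late} window $[T/2,T]$, together with the elementary bound $\int_{T/2}^T e^{-2t(|\xi|^s-c)}\,\d t\ge\kappa\,e^{-2T(|\xi|^s-c)}$ valid uniformly for $|\xi|\le\rho$ with some $\kappa=\kappa(T,c,\rho)>0$, yields $\|e^{-TH}P_\rho\varphi\|^2\le \kappa^{-1}C_0e^{2\gamma\rho}\int_{T/2}^T\|\chi_E e^{-tH}P_\rho\varphi\|^2\,\d t$. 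The crucial point is that on the late window the leakage $\|\chi_E e^{-tH}(I-P_\rho)\varphi\|^2\le\|e^{-tH}(I-P_\rho)\varphi\|^2$ is already of size $e^{-T(\rho^s-c)}\|\varphi\|^2$; hence replacing $P_\rho\varphi$ by $\varphi$ in the observation and absorbing the error leaves a total defect of order $\kappa^{-1}C_0e^{2\gamma\rho}\,T e^{-T(\rho^s-c)}+e^{-2T(\rho^s-c)}$. Keeping $\rho$ fixed and letting $T\to\infty$ drives this below $1$, which is exactly the weak observability inequality. I emphasize that this uses only a \emph{fixed} frequency band together with $T\to\infty$, so no lower bound on $s$ is needed; this is precisely why thickness suffices for every $s>0$ even though the inclusion of observable sets is known only for $s>1$.

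For (ii)$\Rightarrow$(i), I would argue by contraposition: if $E$ is not thick then the weak observability inequality must fail for every $T,C$ and every $\delta\in[0,1)$, and hence $E$ is not stabilizable. Testing the inequality on $\varphi$ bandlimited to a small ball $\{|\xi|\le\eps\}$, where $\|e^{-TH}\varphi\|^2\ge e^{2T(c-\eps^s)}\|\varphi\|^2$ and the factor exceeds $\delta$ once $\eps$ is small, forces the space-time lower bound $\int_0^T\|\chi_E e^{-tH}\varphi\|^2\,\d t\ge\tfrac{1}{C}(e^{2T(c-\eps^s)}-\delta)\|\varphi\|^2$ for all such $\varphi$. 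Because $e^{-tH}$ acts on this fixed band as a multiplier whose symbol stays in a fixed compact range over $[0,T]$, I would convert this time-integrated bound into an honest spatial inequality $\|\psi\|^2\le C'\|\chi_E\psi\|^2$ for all $\psi$ bandlimited to $\{|\xi|\le\eps\}$; by Kovrijkine's quantitative converse to Logvinenko--Sereda, the validity of such a spectral inequality is equivalent to thickness of $E$, giving the required contradiction.

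The main obstacle I anticipate is this final transfer in the converse: a function that is small on $E$ at $t=0$ need not stay small on $E$ under $e^{-tH}$, and the nonlocal fractional kernel (with only polynomial spatial decay when $s<2$) makes the spreading delicate to control over the whole window $[0,T]$. Resolving it should require either the equicontinuity/compactness of the evolution on a fixed band or a direct construction, in the spirit of the necessity arguments of \cite{WWZZ,AB}, of a normalized bandlimited sequence that remains concentrated away from $E$ throughout $[0,T]$; the forward direction, by contrast, is essentially self-contained once the late-window estimate above is in place.
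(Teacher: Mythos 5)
Your forward implication is correct in substance and takes a genuinely different, simpler route than the paper. The paper proves (i)$\Rightarrow$(ii) by feeding Lemma \ref{lemm-spec-frac} and the Plancherel dissipation estimate into the abstract criterion of Lemma \ref{lem-0408-01}, whose proof is a delicate recurrence-plus-telescoping variant of the Lebeau--Robbiano iteration; you instead observe that, since weak observability is only needed at \emph{one} time, a single fixed frequency band $\{|\xi|\le\rho\}$ with $T\to\infty$ already drives the defect below $1$. This one-shot argument is sound and exploits exactly the relaxation the paper highlights in Remark \ref{rmk-le-Ro}, but in a more transparent way. One quantitative slip: with the window $[T/2,T]$ your constant $\kappa$ must absorb the \emph{growing} modes $|\xi|^s<c$, which forces $\kappa\lesssim Te^{-Tc}$, so the leakage term is of order $e^{2\gamma\rho}e^{-T(\rho^s-2c)}$ and you need $\rho^s>2c$, not merely $\rho^s>c$; alternatively, integrate over $[T-1,T]$, where $\kappa\ge e^{-2c}$ is independent of $T$, and then $\rho^s>c$ suffices. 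Either fix is trivial.

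In the converse you correctly reduce, via Theorem \ref{thm-txw}, to showing that weak observability forces thickness, but the step you flag as "the main obstacle" is left unexecuted, and as written this is a genuine gap. It is, however, an easy one to close, and your worry about the polynomially decaying fractional kernel is a red herring: the transfer should be done on the Fourier side, not the physical side. For $\psi$ bandlimited to $\{|\xi|\le\eps\}$ and $t\in[0,T]$ one has $e^{-tH}\psi=e^{tc}\mathcal{F}^{-1}\bigl(e^{-t|\xi|^s}\widehat{\psi}\bigr)$, and the multiplier satisfies $0\le 1-e^{-t|\xi|^s}\le T\eps^s$ on the band, whence
\begin{equation*}
\|\chi_E e^{-tH}\psi\|_{L^2(\R^n)}\le e^{Tc}\bigl(\|\chi_E\psi\|_{L^2(\R^n)}+T\eps^s\|\psi\|_{L^2(\R^n)}\bigr).
\end{equation*}
Inserting this into your time-integrated lower bound and using $c\ge0$, $\delta<1$, the error term $O(T^3\eps^{2s})\|\psi\|^2$ is absorbed for $\eps$ small, yielding $\|\psi\|^2\le C''\|\chi_E\psi\|^2$ on the band; the Logvinenko--Sereda converse (cited by the paper in Subsection \ref{subsec-3.3}) then gives thickness. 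The paper avoids this route entirely: it tests \eqref{equ-stable} directly with the translated, dilated kernels $\varphi(x;l)=l^{-n/s}g\bigl((x-x_0)/l^{1/s}\bigr)$, $g=\mathcal{F}^{-1}e^{-|\cdot|^s}$, whose evolution is explicitly self-similar, chooses $l_0$ so that $(T+l_0)^{-n/(2s)}-\alpha l_0^{-n/(2s)}>0$ (neutralizing the $\alpha$-term --- the same role your small-$\eps$ limit plays), and uses the decay $|g(x)|\lesssim(1+|x|^2)^{-(n+s)/2}$ to localize the observation in $B(x_0,L_0)$, obtaining $|E\cap B(x_0,L_0)|\ge C_4$ uniformly in $x_0$. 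Both converses are valid; the paper's is self-contained at the cost of kernel estimates, yours outsources the geometry to the known converse of Logvinenko--Sereda but needs the frequency-side perturbation argument above to be complete.
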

Several notes on Theorem \ref{thm-frac-laplace} are given in order.

\begin{itemize}

\item [($\textbf{d}_1$)] We explain why there is
the restriction $c\geq 0$ in $(ii)$ of Theorem \ref{thm-frac-laplace}:
From \eqref{1.9-2-28-w}, we see that when $c<0$,
the equation \eqref{equ-1}, with  \eqref{equ-frac} where $s>0$,
is  exponentially stable,
while when $c\geq 0$, it  is not.
Thus, when studying the stabilization for \eqref{equ-1}
with \eqref{equ-frac}, we only need to focus on the case
 that $c\geq 0$ and $s>0$.

\item [($\textbf{d}_2$)]
Since each  {\it observable set} is a {\it stabilizable set},
  the class of  {\it stabilizable sets} contains
  the class of  {\it observable sets}.
  On the other hand,
  for \eqref{equ-1} with \eqref{equ-frac} where $s>1$ and $c\in \mathbb{R}$,
    it was proved in \cite[Remark 1.13]{AB} that
  the {\it observable sets} are characterized by thick sets.
   Then by
   Theorem  \ref{thm-frac-laplace}, we see that
    the classes
   of {\it stabilizable sets} and {\it observable  sets}
   coincide for this case.

\item [($\textbf{d}_3$)] In the case that $s\in (0,1)$ and  $c\geq 0$, it follows from Theorem \ref{thm-frac-laplace}
that the {\it stabilizable sets} for \eqref{equ-1} with \eqref{equ-frac} are characterized by thick sets,
while in the case that either $s\in (0,1)$, $c\in \mathbb{R}$ and $n\geq 1$ or $s=1$,
$c\in \mathbb{R}$ and $n=1$,
it follows from \cite[Theorem 3 \& Remark 7]{Ko} and
\cite[Theorem 1.1]{Li} that
 some thick sets (for instance, $B^c(x,R)$ with
 $x\in \R^n$ and $R>0$) are not {\it observable sets} for
the equation \eqref{equ-1} with  \eqref{equ-frac}.
Hence, for the equation \eqref{equ-1} with \eqref{equ-frac} where either
 $s\in (0,1)$, $c\geq 0$ and $n\geq 1$ or $s=1$, $c\geq 0$ and $n=1$,
the class of {\it stabilizable sets} contains strictly the class of
{\it observable sets}. We would like to mention what follows:
For the case where $s=1$, $c\in \mathbb{R}$ and $n\ge 2$,
it is still open whether there
is a thick set which is not an {\it observable set} for \eqref{equ-1}
 with \eqref{equ-frac},  to our best knowledge   (see \cite{Ko,Li}).

\item  [($\textbf{d}_4$)]
To prove  $(i)\Rightarrow (ii)$, we built up  an
abstract criteria for the stabilizability, i.e., Lemma \ref{lem-0408-01}. It
 can be viewed as
a variant of Lebeau-Robbiano strategy and has independent significance.
\end{itemize}

\vskip 5pt

The second main result concerns with  the equation
\eqref{equ-1} with  \eqref{equ-herm}.

\begin{theorem}\label{thm-hermite}
Let $E\subset \R^n$.
 Then the following statements are equivalent:
\begin{itemize}
  \item [(i)]  The set $E$ has a positive  measure.
  \item [(ii)] The set $E$ is a {\it stabilizable set}
  for \eqref{equ-1} with \eqref{equ-herm} where $c\geq n$.
\end{itemize}
\end{theorem}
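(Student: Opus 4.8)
The plan is to prove the two implications separately, exploiting the fact that the shifted Hermite operator has discrete spectrum, so that its unstable part is finite-dimensional; the whole difficulty will then be concentrated in a single spectral inequality on that finite-dimensional space.

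For $(ii)\Rightarrow(i)$ I argue by contraposition. If $|E|=0$, then $\chi_E=0$ as a multiplication operator on $L^2(\R^n)$, hence $\chi_E K=0$ for \emph{every} bounded $K$, and the perturbed semigroup coincides with $\{e^{-tH}\}_{t\ge0}$. Since $c\ge n$, \eqref{1.10,5.28w} gives $\|e^{-tH}\|_{\mathcal L(L^2(\R^n))}=e^{(c-n)t}\ge1$ for all $t\ge0$, so \eqref{equ-stable-1} cannot hold for any $M,\omega>0$ and $E$ is not a stabilizable set.

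For $(i)\Rightarrow(ii)$, I first record the spectral structure: $-\Delta+|x|^2$ has eigenvalues $\{n+2m:m\in\N\}$ with real-analytic Hermite eigenfunctions, so $\sigma(H)=\{\,n+2m-c:m\in\N\,\}$. Because $c\ge n$, only finitely many eigenvalues lie in $(-\infty,0]$; let $V\subset L^2(\R^n)$ be the finite-dimensional spectral subspace spanned by the corresponding Hermite functions (including the eigenvalue $0$ when $c-n$ is an even integer), let $\pi$ be the orthogonal projection onto $V$, and let $2\alpha>0$ be the smallest positive eigenvalue, so that $H\ge2\alpha$ on $V^\perp:=(I-\pi)L^2(\R^n)$ and $\|e^{-tH}(I-\pi)\|_{\mathcal L(L^2(\R^n))}\le e^{-2\alpha t}$. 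The crucial step is the following spectral inequality: there is $C=C(E,V)>0$ with $\|w\|_{L^2(\R^n)}^2\le C\int_E|w|^2\,\d x$ for all $w\in V$. Indeed, every $w\in V$ is real-analytic on $\R^n$, and a nontrivial real-analytic function has a zero set of Lebesgue measure zero; hence no nonzero $w\in V$ can vanish a.e.\ on the positive-measure set $E$, so $w\mapsto(\int_E|w|^2\,\d x)^{1/2}$ is a norm on the finite-dimensional space $V$, and the inequality follows from equivalence of norms in finite dimensions.

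This inequality is exactly the Fattorini--Hautus condition for the control system $\dot y=-Hy+\chi_E u$ projected onto $V$: if $w\in V$ satisfies $Hw=\lambda w$ with $\lambda\le0$ and $\chi_E w=0$, then $w=0$. Thus the finite-dimensional pair $(-H|_V,\,\pi\chi_E)$ is controllable, and by classical pole assignment there is a finite-rank $F\colon V\to L^2(\R^n)$ placing the eigenvalues of $-H|_V+\pi\chi_E F$ strictly inside $\{\,\mathrm{Re}\,z<-\omega\,\}$ for some $0<\omega<2\alpha$. Setting $K:=F\pi$, which is bounded on $L^2(\R^n)$, the generator $-(H-\chi_E K)$ is lower block-triangular with respect to $L^2(\R^n)=V\oplus V^\perp$, with diagonal blocks $-H|_V+\pi\chi_E F$ and $-H|_{V^\perp}$ both generating exponentially stable semigroups and a bounded off-diagonal coupling $(I-\pi)\chi_E F$; a standard variation-of-constants estimate then yields \eqref{equ-stable-1}, so $E$ is a stabilizable set. (Alternatively, the same spectral inequality feeds the weak observability inequality of Theorem \ref{thm-txw}, after splitting $L^2(\R^n)=V\oplus V^\perp$ and estimating the stable part by its exponential decay.) The heart of the argument, and the only place where the hypothesis $|E|>0$ is used, is the spectral inequality on $V$; I expect the main obstacle to be precisely this step, namely justifying that real-analyticity forces a nonzero element of $V$ to be nonzero on $E$, whereas the finite-dimensionality of the unstable subspace makes the remaining feedback construction and block-triangular stability estimate routine.
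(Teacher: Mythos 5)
Your proposal is correct, but it takes a genuinely different route from the paper's. For $(ii)\Rightarrow(i)$, the paper invokes Theorem \ref{thm-txw} and tests the resulting weak observability inequality \eqref{equ-0423-34} with the Gaussian ground state $\Phi_0$; your contrapositive observation that $|E|=0$ forces $\chi_E K=0$ for every bounded $K$, so the closed loop equals the open loop whose norm is $e^{(c-n)t}\geq 1$, is simpler and bypasses Theorem \ref{thm-txw} entirely. For $(i)\Rightarrow(ii)$, the paper runs quantitative machinery: the spectral inequality of Lemma \ref{lemm-spec-herm} (with growth $e^{\frac{n}{2}k\ln k+Ck}$ over \emph{all} spectral clusters, proved via the Remez-type polynomial estimates borrowed from \cite{BJP}), the dissipative estimate \eqref{equ-0419-0}, and the relaxed Lebeau--Robbiano iteration of Lemma \ref{lem-0408-01}, which together give the weak observability inequality and hence stabilizability through Theorem \ref{thm-txw}. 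You instead exploit that the unstable spectral subspace $V$ is finite dimensional, prove only a qualitative norm equivalence between $\|\cdot\|_{L^2(\R^n)}$ and $\|\cdot\|_{L^2(E)}$ on $V$ (valid because nonzero elements of $V$ are polynomials times Gaussians, whose zero sets are Lebesgue-null), and then stabilize by a finite-rank feedback plus a block-triangular variation-of-constants estimate. This is essentially the strategy the paper reserves for Theorem \ref{thm-general-2} (Condition II potentials, nonempty open sets), upgraded from open sets to sets of positive measure thanks to the analyticity of Hermite eigenfunctions, which replaces the Schechter--Simon unique continuation lemma. What each approach buys: yours is shorter, constructive (an explicit finite-rank $K$), and avoids both Lemma \ref{lem-0408-01} and Lemma \ref{lemm-spec-herm}; the paper's is quantitative --- it establishes the stronger weak observability inequality and the spectral inequality of independent interest --- and its framework covers simultaneously the fractional case of Theorem \ref{thm-frac-laplace}, where the unstable spectral subspace is infinite dimensional and your reduction is unavailable. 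One small point to tidy: classical pole placement is stated for finite-dimensional input spaces, whereas your input space is $L^2(\R^n)$; either restrict to a finite-dimensional input subspace spanned by finitely many vectors $\chi_E w$, $w\in V$, or, more simply, note that your spectral inequality makes the Gram operator $\pi\chi_E\pi|_V$ positive definite, so the feedback $F=-\rho\,\chi_E|_V$ with $\rho$ large renders the diagonal block $-H|_V-\rho\,\pi\chi_E\pi|_V$ negative definite --- this mirrors the Gram-matrix feedback \eqref{equ-51-14} used in the paper's proof of Theorem \ref{thm-general-2}.
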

Some remarks about Theorem \ref{thm-hermite}
are given as follows:
\begin{itemize}
\item [($\textbf{e}_1$)] We explain why there is the restriction
$c\geq n$ in $(ii)$ of Theorem \ref{thm-hermite}: From  \eqref{1.10,5.28w}, we see that
    when $c<n$, the equation \eqref{equ-1} with \eqref{equ-herm}
     is exponentially stable,
    while when $c\geq n$, it is not.
    Thus, when studying the stabilization for \eqref{equ-1}
    with \eqref{equ-herm}, we only need to focus on the case that $c\geq n$.

\item [($\textbf{e}_2$)] For the equation \eqref{equ-1}
with \eqref{equ-herm} where $c\geq n$,
the class of {\it observable sets} is strictly contained
in the class of {\it  stabilizable sets}.
Indeed, given a set of positive measure
in a half space of $\R^n$, we see from
\cite[Theorem 1.10]{M09} that it is not an {\it observable set},
 while we find by
Theorem \ref{thm-hermite} that it is a {\it stabilizable set}.

\item [($\textbf{e}_3$)] From Theorem \ref{thm-frac-laplace}
and Theorem \ref{thm-hermite}, we see that
the geometric characterizations for \eqref{equ-1}, with
\eqref{equ-frac} and \eqref{equ-herm} respectively, are
different. The reason is that
 the spectral inequalities for $H$, given by \eqref{equ-frac}
and \eqref{equ-herm} respectively,
are  different (see Lemma \ref{lemm-spec-frac}
and Lemma \ref{lemm-spec-herm}).

\end{itemize}

\subsection{Some sufficient conditions on  stabilizable sets}

The following two  results concern
sufficient conditions on  {\it  stabilizable sets} for
the equation \eqref{equ-1} with \eqref{1.4,5-27w}.

\begin{theorem}\label{thm-general-1}
Suppose that $V$ satisfies {\it Condition I}. Then every
thick set is a {\it  stabilizable set} for \eqref{equ-1} with \eqref{1.4,5-27w}.
\end{theorem}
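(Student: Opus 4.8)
The plan is to bypass the Lebeau--Robbiano machinery entirely and instead exhibit an explicit dissipative feedback, reducing the whole statement to a single coercivity estimate. Condition I gives, for every $\varphi\in C_0^\infty(\R^n)$, the one-sided form bound $(H\varphi,\varphi)=\|\nabla\varphi\|_{L^2(\R^n)}^2+\int_{\R^n}V|\varphi|^2\,\d x\ge(1-\delta)\|\nabla\varphi\|_{L^2(\R^n)}^2\ge0$, where we simply discard the nonnegative contribution of $V_+$ and absorb $V_-$ via the relative bound $\delta<1$. Thus $H$ is self-adjoint, nonnegative, with form domain $H^1(\R^n)$, and dominates $(1-\delta)(-\Delta)$ in the form sense. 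The candidate feedback is $K:=-\beta\chi_E$ for a constant $\beta>0$ to be fixed; it is a bounded operator on $L^2(\R^n)$, and since $\chi_E^2=\chi_E$ we get $\chi_E K=-\beta\chi_E$, so that the closed-loop generator is the self-adjoint operator $H-\chi_E K=H+\beta\chi_E$.

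The crux is an uncertainty (Poincar\'e-type) inequality for thick sets: if $E$ is thick, then there are $\beta_0,\omega_0>0$ with
\[
\|\nabla\varphi\|_{L^2(\R^n)}^2+\beta_0\|\varphi\|_{L^2(E)}^2\ge\omega_0\|\varphi\|_{L^2(\R^n)}^2,\qquad \varphi\in H^1(\R^n).
\]
I would prove this by a low/high frequency splitting combined with the spectral inequality for thick sets (Lemma \ref{lemm-spec-frac}), applied to the projection onto Fourier modes $|\xi|\le r$. Writing $\varphi=\varphi_1+\varphi_2$ with $\varphi_1$ the frequency-$\le r$ part, Plancherel gives $\|\varphi_2\|_{L^2(\R^n)}^2\le r^{-2}\|\nabla\varphi\|_{L^2(\R^n)}^2$, while the spectral inequality gives $\|\varphi_1\|_{L^2(\R^n)}\le Ce^{Cr}\|\varphi_1\|_{L^2(E)}$. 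Estimating $\|\varphi_1\|_{L^2(E)}\le\|\varphi\|_{L^2(E)}+\|\varphi_2\|_{L^2(\R^n)}$ and recombining yields $\|\varphi\|_{L^2(\R^n)}^2\le 4C^2e^{2Cr}\|\varphi\|_{L^2(E)}^2+(4C^2e^{2Cr}+2)r^{-2}\|\nabla\varphi\|_{L^2(\R^n)}^2$; fixing any single $r>0$ and rearranging produces the displayed inequality. This step is where thickness is genuinely used, and I expect it to be the main (indeed, essentially the only) substantive point; everything else is soft.

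With this inequality in hand I would finish as follows. Fix the feedback constant $\beta:=(1-\delta)\beta_0$. Then, for all $\varphi$ in the form domain,
\[
\big((H+\beta\chi_E)\varphi,\varphi\big)\ge(1-\delta)\|\nabla\varphi\|_{L^2(\R^n)}^2+\beta\|\varphi\|_{L^2(E)}^2\ge(1-\delta)\omega_0\|\varphi\|_{L^2(\R^n)}^2=:\omega'\|\varphi\|_{L^2(\R^n)}^2 .
\]
Hence the self-adjoint operator $H-\chi_E K=H+\beta\chi_E$ is bounded below by $\omega'>0$, so its spectrum lies in $[\omega',\infty)$ and, by the functional calculus, $\|e^{-t(H-\chi_E K)}\|_{\mathcal{L}(L^2(\R^n))}\le e^{-\omega' t}$ for all $t\ge0$. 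This is exactly \eqref{equ-stable-1} with $M=1$, so $E$ is a stabilizable set.

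Finally, I would stress two features that explain why this argument is so much softer than the Hermite case. First, $V$ enters only through the one-sided bound on $V_-$: its positive part $V_+$, however large or singular, is simply dropped and only helps coercivity, so that no spectral information on $H$ (not even discreteness of $\sigma(H)$) is required. Second, because the feedback is chosen so that the closed-loop operator is self-adjoint and uniformly positive, exponential stability is immediate from the functional calculus rather than from an observability/Lebeau--Robbiano iteration; correspondingly, the argument establishes only the sufficient direction asserted in the statement and makes no attempt at a converse.
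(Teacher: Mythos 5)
Your proof is correct and is essentially the paper's own argument: the paper (Lemma \ref{thm-general}) also takes a closed-loop operator of the form $H+\chi_E$, proves the same coercivity estimate by combining the Condition I bound $(1-\delta)\|\nabla\varphi\|_{L^2(\R^n)}^2$ with the thick-set spectral inequality of Lemma \ref{lemm-spec-frac} (at $s=1$, $c=0$) through a low/high frequency splitting, and then deduces exponential decay of the semigroup. The only cosmetic differences are that you tune the feedback gain $\beta$ while the paper fixes gain $1$ and instead tunes the frequency cutoff $N$, and that you conclude via the functional calculus for the self-adjoint closed-loop operator where the paper argues via a resolvent estimate and the Hille--Yosida theorem.
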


\begin{theorem}\label{thm-general-2}
Suppose that $V$ satisfies {\it Condition II}. Then each nonempty
open set is a {\it  stabilizable set} for \eqref{equ-1} with \eqref{1.4,5-27w}.
\end{theorem}
Some notes on Theorem \ref{thm-general-1} and  Theorem \ref{thm-general-2} are given in order.
\begin{itemize}
\item [($\textbf{f}_1$)] From ($\textbf{a}_5$) and ($\textbf{a}_6$)
in Subsection \ref{equation}, we see that when $V$ verifies either
{\it Condition I} or {\it Condition II},
     the equation \eqref{equ-1} with \eqref{1.4,5-27w}
     may not be exponentially stable. Thus,
    it makes sense to study the stabilization
    for \eqref{equ-1} with \eqref{1.4,5-27w}.

  \item [($\textbf{f}_2$)] We cannot get a sufficient
  and necessary condition on the {\it stabilizable sets} for \eqref{equ-1}
  with \eqref{1.4,5-27w}, since
  we are not able to get the desired spectral inequality for
the operator $H$  given by \eqref{1.4,5-27w}. Thus we
cannot use the above-mentioned Lemma \ref{lem-0408-01}
    to prove  either Theorem \ref{thm-general-1} or
    Theorem \ref{thm-general-2}.

\item [($\textbf{f}_3$)]
We use  different approaches to show Theorem \ref{thm-general-1} and
    Theorem \ref{thm-general-2}:
    When $V$ satisfies
{\it Condition I}, we can treat $V$ as a small perturbation of $-\Delta$
in the sense of the quadratic form. Thus, we only need
the spectral inequality for the Laplacian in the
 proof of Theorem \ref{thm-general-1};
 When   $V$ satisfies
{\it Condition II}, we can use
a unique continuation property of the elliptic equation $Hu=0$,
as well as the techniques used in \cite{BT} (see also in \cite{B}), to prove
  Theorem \ref{thm-general-2}.

\item [($\textbf{f}_4$)]  As a comparison, we mention some sufficient
 conditions on the {\it observable sets} for \eqref{equ-1}
  with \eqref{1.4,5-27w}. First, for the case that
  $$
  V(x)=|x|^{2k},\;\;x\in\R^n,\;\;\mbox{with}\;\;k\in \N^+,
  $$
     it was obtained in  \cite{M09,DM} that when $k\geq 2$,
the cone
$$
E:=\{x\in \mathbb{R}^{n}: |x|\geq r_0, x/|x|\in \Theta_0\}
$$
(Here, $r_0> 0$ and  $\Theta_0$
 is a nonempty open subset of $\mathbb{S}^{n-1}$.)
 is an {\it observable set}, while when $k=1$, the above cone is no longer
 an {\it observable set};  it was proved in  \cite{BJP} that when $k=1$, every thick set
 is an {\it observable set}. Second, for the case that $V$ is a real valued analytic
 potential vanishing at infinity,
it was proved in  \cite{LM19} that each thick set is an {\it observable set}.

Besides, we would like to mention the recent work \cite{DWZ} where {\it observable sets} were studied for
\eqref{equ-1}
  with \eqref{1.4,5-27w} where potentials are  bounded and time-dependent.
\end{itemize}

\subsection{Plan of the paper}\label{plan}

The  rest of the paper is organized as follows:  Section \ref{sec-abstract}
presents an abstract criteria for
the stabilization. Section \ref{sec-2.2} gives two spectral inequalities.
Section \ref{sec-3} proves Theorem \ref{thm-frac-laplace}
and Theorem \ref{thm-hermite} with the aid of lemmas built up
in Section \ref{sec-abstract} and Section \ref{sec-2.2}.
Section \ref{sec-4} shows Theorem \ref{thm-general-1}
and Theorem \ref{thm-general-2} via different approaches.

\section{An abstract criteria for stabilization}\label{sec-abstract}

This section presents a criteria for the stabilizability from
the perspective of spectral inequalities.
First of all, we introduce the next Theorem \ref{thm-txw} which
is a direct consequence of  \cite[Theorem 1]{TWX} and plays an important role in our studies.

\begin{theorem}[\cite{TWX}]\label{thm-txw}
Let $E$ be a measurable subset  of $\R^n$. Then
$E$ is a {\it
 stabilizable set} for \eqref{equ-1}  if and only if   there is $T>0$, $\alpha\in (0,1)$ and  $C=C(E,T)>0$ so that when $y$
 solves \eqref{equ-1},
\begin{align}\label{equ-stable}
 \|y(T,\cdot)\|_{L^2(\R^n)} \leq C\left(\int_{0}^T
 \int_{E}|y(t,x)|^2\d x\d t\right)^{1/2} + \alpha  \|y(0,\cdot)\|_{L^2(\R^n)}.
\end{align}
\end{theorem}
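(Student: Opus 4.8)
The plan is to recognize the statement as the specialization of the abstract stabilizability criterion \cite[Theorem 1]{TWX} to the concrete parabolic system \eqref{equ-1}, and to check that the hypotheses of that abstract result hold in each of the cases (i)--(iii). First I would recast \eqref{equ-1} as an abstract control system. Take the state and control spaces to be $X=U=L^2(\R^n)$, let $A:=-H$, which by notes $(\textbf{a}_2)$--$(\textbf{a}_4)$ generates an analytic (in particular $C_0$-) semigroup $S(t)=e^{-tH}$ on $X$, and let $B:=\chi_E\in\mathcal{L}(U,X)$ be the (bounded) multiplication operator. Under the feedback $u=Ky$ with $K\in\mathcal{L}(X,U)=\mathcal{L}(L^2(\R^n))$, the closed-loop generator is $A+BK=-(H-\chi_E K)$. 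Hence $E$ is a {\it stabilizable set} in the sense of $(\textbf{b}_1)$ precisely when the pair $(A,B)$ is exponentially stabilizable by a bounded feedback; this identifies the left-hand notion with the hypothesis of the abstract theorem.

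Next I would invoke \cite[Theorem 1]{TWX}, whose conclusion is that $(A,B)$ is exponentially stabilizable by a bounded feedback if and only if the dual semigroup obeys a weak (final-state) observability inequality: there exist $T>0$, $\alpha\in(0,1)$ and $C>0$ so that
\begin{align*}
\|S(T)^* z\|_{L^2(\R^n)} \le C\left(\int_0^T \|B^* S(t)^* z\|_{L^2(\R^n)}^2\,\d t\right)^{1/2} + \alpha\|z\|_{L^2(\R^n)},\quad z\in L^2(\R^n).
\end{align*}
The only place where the structure of our operators enters is the passage from this dual inequality to \eqref{equ-stable}, and here I would use self-adjointness. In each of the cases (i)--(iii) the operator $H$ is self-adjoint, so $S(t)^*=(e^{-tH})^*=e^{-tH}=S(t)$; moreover $\chi_E$ is multiplication by a real-valued function, so $B^*=\chi_E^*=\chi_E$. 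Writing $y(t):=e^{-tH}z$ for the solution of \eqref{equ-1} with $y(0)=z$, and using $\|\chi_E e^{-tH}z\|_{L^2(\R^n)}^2=\int_E|y(t,x)|^2\,\d x$, the dual inequality becomes exactly \eqref{equ-stable}, which gives the asserted equivalence.

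The argument is essentially bookkeeping, so the main (and only) point requiring care is to confirm that the abstract framework of \cite{TWX} applies verbatim in the present situation: that $-H$ generates a $C_0$-semigroup and $H$ is self-adjoint in all three cases (guaranteed by the references cited in $(\textbf{a}_2)$--$(\textbf{a}_4)$), that the admissible feedbacks there are exactly the bounded operators $K$ on $L^2(\R^n)$, and that the boundedness of $B=\chi_E$ places us in the ``bounded control operator'' regime of that theorem. Since $B$ is bounded, no admissibility, trace, or regularity issues arise, and I expect this verification to be routine rather than the crux of any real difficulty.
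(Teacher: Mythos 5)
Your proposal is correct and follows exactly the route the paper takes: the paper gives no proof at all, simply declaring the theorem a direct consequence of \cite[Theorem 1]{TWX}, and your write-up is precisely that specialization (taking $X=U=L^2(\R^n)$, $A=-H$, $B=\chi_E$, and using self-adjointness of $H$ and $\chi_E$ to identify the dual weak observability inequality with \eqref{equ-stable}). The only difference is that you spell out the bookkeeping the authors left implicit.
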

 We call \eqref{equ-stable} a weak observability inequality.
Before stating the above-mentioned criteria, we explain its main connotations:
The Lebeau-Robbiano strategy (see \cite{LR}) leads to the observability inequality,
 equivalently the null controllability,
 for the heat equation.
This strategy has been generalized to abstract settings in  some Hilbert spaces
(see \cite{BP,M09,M10,TT}) and   in some Banach spaces (see \cite{GST}).
In essence, it  is  a combination of the   Lebeau-Robbiano spectral inequality (see \cite{LR})
and a dissipative inequality.
 A key condition in this combination is as: the decay rate in
 the  dissipative inequality is strictly larger than the growth rate
 in the spectral inequality.
 Since the  gap between the null controllability and the stabilizabilty is explicitly given by
\eqref{equ-ob} and \eqref{equ-stable},
we find
that the aforementioned key condition can be relaxed in
the studies of the stabilizabilty. This relaxed condition leads
to an abstract criteria for the stabilizabilty in
the next Lemma \ref{lem-0408-01}. It plays an important role
in the  proofs of both Theorem \ref{thm-frac-laplace}
and Theorem \ref{thm-hermite}.

\begin{lemma}\label{lem-0408-01}
Let $H$ be a self-adjoint operator so that $(-H)$
 generates a $C_0$ semigroup $\{e^{-tH}\}_{t\geq 0}$
on $L^2(\R^n)$. Let $E$ be a measurable
set in $\R^n$ and let $\{\pi_{k}\}_{k\in \N^+}$ be a family
of orthogonal projections  on $L^2(\R^n)$.
If there are positive constants   $c_1, c_2, a, b,M$ so that  for each $k\in\N^+$,
the   spectral inequality
 \begin{equation}\label{equ-0408-05}
\|\pi_k \varphi\|_{L^2(\R^n)}\leq e^{c_1k^a}\|\pi_k \varphi\|_{L^2(E)},
\; \mbox{when}\;\varphi\in L^2(\R^n)
\end{equation}
and the   dissipative inequality
 \begin{equation}\label{equ-0408-06}
\left\|(1-\pi_k) (e^{-tH}\varphi)\right\|_{L^2(\R^n)}
\leq M e^{-c_2tk^b}\|\varphi\|_{L^2(\R^n)},
\;\mbox{when}\;\varphi\in L^2(\R^n),\,t>0
\end{equation}
hold,
then $E$ is a  stabilizable set for \eqref{equ-1}.
\end{lemma}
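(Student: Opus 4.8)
The plan is to invoke Theorem \ref{thm-txw}, which reduces the claim to producing some $T>0$, $\alpha\in(0,1)$ and $C>0$ for which the weak observability inequality \eqref{equ-stable} holds for every solution $y(t,\cdot)=e^{-tH}y(0,\cdot)$ of \eqref{equ-1}; write $y_0=y(0,\cdot)$. Since $H$ is self-adjoint and $(-H)$ generates the $C_0$ semigroup, $\sigma(H)$ is bounded below, so with $\beta:=\max\{0,-\inf\sigma(H)\}$ one has the crude bound $\|e^{-sH}\|_{\mathcal L(L^2(\R^n))}\le e^{\beta s}$ for all $s\ge0$; this is the only information about the flow I use beyond the two hypotheses. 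First I would estimate the full norm at an intermediate time $t$ by frequency splitting. Writing $y(t)=\pi_k y(t)+(1-\pi_k)y(t)$, the spectral inequality \eqref{equ-0408-05} gives $\|\pi_k y(t)\|_{L^2(\R^n)}\le e^{c_1k^a}\|\pi_k y(t)\|_{L^2(E)}$, and since $\|\pi_k y(t)\|_{L^2(E)}\le\|y(t)\|_{L^2(E)}+\|(1-\pi_k)y(t)\|_{L^2(\R^n)}$, the dissipative inequality \eqref{equ-0408-06} (with $\varphi=y_0$) yields
\begin{equation*}
\|y(t)\|_{L^2(\R^n)}\le e^{c_1k^a}\|y(t)\|_{L^2(E)}+(1+e^{c_1k^a})Me^{-c_2tk^b}\|y_0\|_{L^2(\R^n)}.
\end{equation*}

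Next I would transport this bound to the terminal time. Using $\|y(T)\|_{L^2(\R^n)}=\|e^{-(T-t)H}y(t)\|_{L^2(\R^n)}\le e^{\beta(T-t)}\|y(t)\|_{L^2(\R^n)}$, integrating the previous inequality over a terminal window $t\in[T-\tau,T]$ of \emph{fixed} length $\tau>0$, bounding $e^{\beta(T-t)}\le e^{\beta\tau}$ there, and applying Cauchy--Schwarz to the observation term (so that $\int_{T-\tau}^T\|y(t)\|_{L^2(E)}\,\d t\le\tau^{1/2}(\int_0^T\!\int_E|y(t,x)|^2\,\d x\,\d t)^{1/2}$) together with $\int_{T-\tau}^T e^{-c_2tk^b}\,\d t\le\tau e^{-c_2(T-\tau)k^b}$, I obtain after dividing by $\tau$ that
\begin{equation*}
\|y(T)\|_{L^2(\R^n)}\le \frac{e^{\beta\tau}e^{c_1k^a}}{\tau^{1/2}}\Big(\int_0^T\!\!\int_E|y(t,x)|^2\,\d x\,\d t\Big)^{1/2}+e^{\beta\tau}(1+e^{c_1k^a})Me^{-c_2(T-\tau)k^b}\|y_0\|_{L^2(\R^n)}.
\end{equation*}
This is precisely the form of \eqref{equ-stable}, with $C=e^{\beta\tau}e^{c_1k^a}\tau^{-1/2}$ and residual coefficient $\alpha=e^{\beta\tau}(1+e^{c_1k^a})Me^{-c_2(T-\tau)k^b}$.

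Finally I would close the argument by decoupling the parameters, which is where the relaxation emphasized before the lemma is exploited. Freezing $k$ (for instance $k=1$) and $\tau$ makes $C$ a fixed constant, while $\alpha\to0$ as $T\to\infty$ since $e^{-c_2(T-\tau)k^b}\to0$; hence one picks $T$ large enough that $\alpha\le\tfrac12<1$, and Theorem \ref{thm-txw} then certifies that $E$ is a stabilizable set for \eqref{equ-1}.

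The main obstacle is the backward-in-time growth factor $e^{\beta(T-t)}$ of the flow: integrating over the whole interval $[0,T]$ would force this factor up to $e^{\beta T}$, which overwhelms the residual term as $T\to\infty$. Localizing the observation to a short terminal window $[T-\tau,T]$ keeps $e^{\beta(T-t)}\le e^{\beta\tau}$ bounded while still allowing $T$ to be taken large enough to defeat the high-frequency tail $Me^{-c_2(T-\tau)k^b}$. In contrast with the classical Lebeau--Robbiano scheme, I never need the decay exponent $b$ to dominate the spectral exponent $a$: the admissible loss $\alpha\|y_0\|$ in \eqref{equ-stable} lets me simply fix $k$ and spend time, so no iteration or telescoping over dyadic frequency scales is required.
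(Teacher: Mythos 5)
Your proposal is correct, and it takes a genuinely different --- and considerably more elementary --- route than the paper's proof. Both arguments start from the same frequency-splitting estimate: combining the spectral inequality \eqref{equ-0408-05} on $\pi_k y(t)$ with the dissipative inequality \eqref{equ-0408-06} on $(1-\pi_k)y(t)$, which is in substance the paper's inequality \eqref{equ-0408-10}. From there the paths diverge. The paper runs a full Lebeau--Robbiano-type machine: it shifts the operator to $\tilde H=H+\delta_0 I$, chooses a time-dependent frequency $k(\tau)=\left[(A/\tau)^{1/b}\right]$, derives the recurrence inequality \eqref{equ-0408-07} for $g(\tau)\|e^{-\tau\tilde H}\varphi\|^2$, and telescopes over the dyadic times $t_j=2^{-j}t$, with delicate bookkeeping of the constants $\gamma, N, A, \tau_0, \alpha_0$ so that the leftover term comes out strictly below $1$ at an explicitly computable time $T=A/N$. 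You instead freeze a single frequency ($k=1$) and a terminal window $[T-\tau,T]$ of fixed length, and let $T\to\infty$: the high-frequency remainder decays like $e^{-c_2(T-\tau)}$ while your constant $C$ stays fixed, so the admissible loss $\alpha\|y_0\|_{L^2(\R^n)}$ in \eqref{equ-stable} eventually drops below $1$. Your diagnosis of the one real obstacle --- that the backward transport factor must be confined to $e^{\beta\tau}$, hence the terminal window rather than all of $[0,T]$ --- is exactly right, and your use of self-adjointness for the bound $\|e^{-sH}\|_{\mathcal{L}(L^2(\R^n))}\le e^{\beta s}$ is valid though not essential (the paper's generic bound $M'e^{\delta_0 s}$ would serve, at the cost of an extra constant). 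What your route buys: brevity, the fact that the hypotheses are needed only at a single $k$, and a transparent demonstration of the point emphasized in Remark \ref{rmk-le-Ro}, namely that stabilizability requires no interplay whatsoever between the exponents $a$ and $b$. What the paper's route buys: explicit quantitative control of $(C,\alpha,T)$ in terms of $c_1,c_2,a,b,M,\delta_0$, a weak observability inequality at a prescribed (small) time rather than an asymptotically large one, and a proof structured in exact parallel with the observability argument, so that the role of the dropped condition $a<b$ is laid bare. Since Theorem \ref{thm-txw} only demands existence of \emph{some} $T>0$, your large-$T$ conclusion fully suffices for the lemma as stated.
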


\begin{remark}\label{remark2.2,6-10wang}
 In Lemma \ref{lem-0408-01}, the family
$\{\pi_{k}\}_{k\in \N^+}$ can be replaced by a family
$\{\pi_{k}\}_{1\leq k\in \R}$ of orthogonal projections  on $L^2(\R^n)$.
\end{remark}

\begin{remark}\label{rmk-le-Ro}
 We  compare Lemma \ref{lem-0408-01}
 with the abstract Lebeau-Robbiano strategy
in the following manner:
First, the abstract Lebeau-Robbiano strategy says that
(see e.g. in \cite[Theorem 2.2]{M10}, as well as \cite[Theorem 2.1]{BP})
the observability inequality \eqref{equ-ob} holds, if
\begin{align}\label{equ-0408-06-1-1}
\mbox{\eqref{equ-0408-05} and \eqref{equ-0408-06} are true, and}\,\, a<b.
\end{align}
In \eqref{equ-0408-06-1-1}, the condition $a<b$ is necessary.
 Indeed, there are examples showing that if \eqref{equ-0408-05}
  and \eqref{equ-0408-06} hold
for some $E\subset \R^n$, but $a\geq b$, then
the observability inequality \eqref{equ-ob} is not true  for any $T>0$
(see Remark \ref{rmk-0430-1} for details).
Second,   Lemma \ref{lem-0408-01} says: to get
\eqref{equ-stable},
we only need \eqref{equ-0408-05} and \eqref{equ-0408-06}, but not
the condition $a<b$.
  The advantage we can take from this is as follows:
  We are allowed to afford more cost in the spectral inequality
  when considering smaller $E$. Thus,
  the class of {\it stabilizable sets}  might contain strictly
  the class of {\it observable sets}. These will be discussed in detail
  in      Subsection \ref{subsec-3.3}.

\end{remark}

\begin{proof}[Proof of Lemma \ref{lem-0408-01}.]
According to  Theorem \ref{thm-txw}, to prove that $E$ is a {\it stabilizable set} for \eqref{equ-1},
   it suffices to show
\begin{align}\label{equ-513-1}
\left\|e^{-TH}\varphi\right\|_{L^2(\R^n)}\leq C\Big(\int_0^T\left\|
e^{-tH}\varphi\right\|^2_{L^2(E)}\d t\Big)^{1/2} + \alpha \|\varphi\|_{L^2(\R^n)},
 \;\mbox{when}\; \varphi\in  L^2(\R^n)
\end{align}
for some $C>0$, $T>0$ and $\alpha\in (0,1)$.
Before proving \eqref{equ-513-1}, we give some preliminaries. First,
since $(-H)$ generates a $C_0$ semigroup in $L^2(\R^n)$,
there  is $\delta_0\ge 0$ and $M'>0$  so that
$$
\left\|e^{-tH}\right\|_{\mathcal{L}(L^2(\R^n))}\leq M'e^{\delta_0 t},
\quad  t\geq 0.
$$
Without loss of generality, we can assume that $M'\leq M$
where $M$ is given by \eqref{equ-0408-06}. Then the
  shifted operator\footnote{Here $I$ is the identity operator on $L^2(\mathbb{R}^n)$.}
\begin{equation}\label{equ-0512-002}
\tilde{H}=H+\delta_0I
\end{equation}
 satisfies
\begin{equation}\label{equ-0512-003}
\big\|e^{-t\tilde{H}}\big\|_{\mathcal{L}(L^2(\R^n))}\leq M,\quad t\geq 0.
\end{equation}
Second, with $a,b,c_1,c_2,M$ given in
\eqref{equ-0408-05}-\eqref{equ-0408-06}, we set
\begin{align}\label{equ-513-2}
  \gamma:=2^{\frac{a}{b}+a}>1,
  \quad N:=\max \Big\{2,\, \frac{2^{b+2}\delta_0}{c_2}\Big\},
\end{align}
\begin{align}\label{equ-513-3}
 C(M, \gamma):= M^2 +\frac{\gamma-1}{8M^2}
 \Big(\frac{4M^4}{\gamma}\Big)^{\gamma/(\gamma-1)},\quad D(M, N)
 :=\frac{e^{-2c_1(2N)^{a/b}}}{8M^2N},
\end{align}
\begin{align}\label{equ-513-4}
A:=\frac{2^{b+1}}{c_2}\ln{
\left(1+\frac{25C(M, \gamma)}{D(M, N)}\right)},
\quad \tau_0:=\frac{3A}{2N}.
\end{align}
Third, we define a function:
\begin{align}\label{equ-513-5}
g(\tau):=\frac{\tau}{4M^2}\exp\left\{-2c_1
\left[({A}/{\tau})^{\frac{1}{b}}\right]^a\right\},
\quad \tau\in (0,\tau_0).
\end{align}
{\it Recall that for each $x\in\R$, $[x]$ denotes the integer part of $x$. }

We now prove \eqref{equ-513-1}
by two steps\footnote{Some ideas are borrowed from \cite[Theorem 2.1]{BP}, see also
\cite[Theorem 2.2]{M10}.}:

\vskip 5pt

\noindent \emph{Step 1.  We prove the following  recurrence inequality:
when $\tau\in (0, \tau_0)$ and $\varphi\in L^2(\R^n)$,
\begin{equation}\label{equ-0408-07}
g(\tau)\big\|e^{-\tau \tilde{H}}\varphi\big\|^2_{L^2(\R^n)}
-g({\tau}/{2})\|\varphi\|^2_{L^2(\R^n)}
\leq \int_{\frac{\tau}{2}}^{\tau}{\big\|e^{-t\tilde{H}}\varphi\big\|^2_{L^2(E)}\d t}
+\alpha_0\tau\|\varphi\|^2_{L^2(\R^n)},
\end{equation}
where
\begin{align}\label{equ-513-6}
\alpha_0:=D(M, N)\exp\{- c_2 2^{-(b+1)}A\}/50.
\end{align}
}

First, we observe that when
  $k\in \N^+$,  $\varphi\in L^2(\R^n)$ and $t\in(0,\tau_0)$,
\begin{align}\label{equ-0408-10}
\frac{1}{2}e^{-2c_1k^a}\|e^{-t \tilde{H}}\varphi\|^2_{L^2(\R^n)}
&= \frac{1}{2}e^{-2c_1k^a}\left(\|\pi_k e^{-t\tilde{H}}\varphi\|^2_{L^2(\R^n)}
+\|(1-\pi_k)e^{-t\tilde{H}}
\varphi\|^2_{L^2(\R^n)}\right) \nonumber\\
 &\leq  \frac12 \|\pi_k e^{-t\tilde{H}}\varphi\|^2_{L^2(E)}
 +\frac12e^{-2c_1k^a}\|(1-\pi_k)e^{-t\tilde{H}}\varphi\|^2_{L^2(\R^n)}
       \nonumber\\
  &\leq  \|e^{-t\tilde{H}}\varphi\|^2_{L^2(E)}
  +(1+\frac12e^{-2c_1k^a})\|(1-\pi_k)
  e^{-t\tilde{H}}\varphi\|^2_{L^2(\R^n)}
     \nonumber\\
   &\leq    \|e^{-t\tilde{H}}\varphi\|^2_{L^2(E)}
   + M^2(1+\frac12e^{-2c_1k^a})e^{-2c_2tk^b} \|\varphi\|^2_{L^2(\R^n)}.
\end{align}

In \eqref{equ-0408-10}, we used the fact that each $\pi_{k}$ is an
orthogonal projection  on $L^2(\R^n)$ on Line 1;
 we used the spectral inequality \eqref{equ-0408-05}
on Line 2; we used the Cauchy Schwartz inequality
and the fact that $\|\varphi\|_{L^2(E)}\leq \|\varphi\|_{L^2(\R^n)}$ on Line 3; we used the dissipative
inequality \eqref{equ-0408-06}
and the fact that $e^{-\delta_0t}\le 1$
when $0<t<\tau_0$ on Line 4.

 Next, we arbitrarily fix $\tau\in(0,\tau_0)$ and $\varphi\in L^2(\R^n)$.
 Integrating both sides of \eqref{equ-0408-10} with respect to $t$ on  $(\frac{\tau}{2}, \tau)$,
  using \eqref{equ-0512-003}, we find
  that for all $k\in \N^+$,
\begin{align}\label{equ-0408-11}
& \frac{\tau}{4M^2}e^{-2c_1k^a}
\big\|e^{-\tau \tilde{H}}\varphi\big\|^2_{L^2(\R^n)}
\leq\frac{1}{2}e^{-2c_1k^a}\int_{\frac{\tau}{2}}^{\tau}
{\big\|e^{-t\tilde{H}}\varphi\big\|^2_{L^2(\R^n)}\,\d t} \nonumber\\
 &\leq  \int_{\frac{\tau}{2}}^{\tau}{ \big\|e^{-t\tilde{H}}\varphi\big\|^2_{L^2(\R^n)}\,\d t} +M^2\big(1+\frac12e^{-2c_1k^a}\big)\int_{\frac{\tau}{2}}^{\tau}{e^{-2c_2tk^b} \|
 \varphi\|^2_{L^2(\R^n)}\,\d t}    \nonumber\\
  &\leq   \int_{\frac{\tau}{2}}^{\tau}
  { \big\|e^{-t\tilde{H}}\varphi\big\|^2_{L^2(\R^n)}\,\d t}
  +M^2\big(1+\frac12e^{-2c_1k^a}\big)\frac{\tau e^{-c_2\tau k^b}}{2}\|\varphi\|^2_{L^2(\R^n)}.
\end{align}
In  the last inequality of \eqref{equ-0408-11}, we  used the  estimate:
$$
\int_{\frac{\tau}{2}}^{\tau}{e^{-2c_2tk^b}\,\d t}
<\frac{\tau}{2}e^{-c_2\tau k^b}.
$$
Set
\begin{align}\label{equ-513-7}
  k(\tau):=\left[({A}/{\tau})^{\frac{1}{b}}\right].
\end{align}
By \eqref{equ-513-5} and \eqref{equ-513-7}, it follows that
\begin{align}\label{equ-0408-12}
\frac{\tau}{4M^2}e^{-2c_1k(\tau)^a}=g(\tau).
\end{align}
Meanwhile, by    \eqref{equ-513-2}, \eqref{equ-513-4}
and \eqref{equ-513-7}, it follows  that $k(\tau)\in \N^+$.
Thus, we have \eqref{equ-0408-11} where $k=k(\tau)$.

 We now claim
 \begin{align}\label{equ-0408-13}
M^2\Big(1+\frac12e^{-2c_1k(\tau)^a}\Big)
\frac{\tau e^{-c_2\tau k(\tau)^b}}{2}\leq g({\tau}/{2})+\alpha_0\tau.
\end{align}
When \eqref{equ-0408-13} is proved, the desired \eqref{equ-0408-07}
follows from \eqref{equ-0408-11}
(with $k=k(\tau)$), \eqref{equ-0408-12}  and \eqref{equ-0408-13} at once.

The rest of this step is to show \eqref{equ-0408-13}.
For this purpose,
  we first claim
  \begin{align}\label{equ-0408-13.5}
M^2B\left(1+\frac{x}{2}\right)\leq \frac{1}{8M^2}x^{\gamma}
+\alpha_0,\;\;\mbox{when} \,\,x\in (0,1),
\end{align}
where
\begin{align}\label{2.18-5-29wang}
 B:=\frac{e^{-c_22^{-b}A}}{2}\in (0, 1).
  \end{align}
    To this end, we define a function:
  \begin{eqnarray*}
  F(x):=\frac{1}{8M^2}x^{\gamma}-\frac{M^2B}{2}x
  +\alpha_0-M^2B,\;\; x>0.
  \end{eqnarray*}
  A direct computation shows
  \begin{eqnarray*}
  F'(x)<0,\;\;\mbox{when}\;\;x\in (0,x_0);\;\;
  F'(x)>0,\;\;\mbox{when}\;\;x\in (x_0,\infty),
  \end{eqnarray*}
  where
  \begin{eqnarray*}
 x_0:=\Big(\frac{4M^4B}{\gamma}\Big)^{\frac{1}{\gamma-1}}.
  \end{eqnarray*}
  Then we have
  \begin{align}\label{equ-513-8}
\min_{x>0}F(x) &= F(x_0)\nonumber\\
&=\alpha_0-M^2B-\frac{(\gamma-1)}{8M^2}
\Big(\frac{4M^4B}{\gamma}\Big)^{\frac{\gamma}{\gamma-1}}\nonumber\\
 &\geq  \alpha_0-\Big(M^2+\frac{\gamma-1}{8M^2}
 \Big(\frac{4M^4}{\gamma}\Big)^{\gamma/(\gamma-1)}\Big)B \nonumber\\
  &>0.
\end{align}
In \eqref{equ-513-8}, for the first inequality, we used the fact that
$B^{\frac{\gamma}{\gamma-1}}<B$ (which follows from
facts that $B\in (0, 1)$ and $\gamma/(\gamma-1)>1$);
for the last inequality, we first see from \eqref{equ-513-6}
and \eqref{2.18-5-29wang} that it
is equivalent to
\begin{eqnarray}\label{2.21-5-29wang}
A>\frac{2^{b+1}}{c_2}\ln \frac{25\big(M^2+\frac{\gamma-1}{8M^2}
(\frac{4M^4}{\gamma})^{\gamma/(\gamma-1)}\big)}{D(M, N)},
\end{eqnarray}
then we obtained \eqref{2.21-5-29wang}
from
 \eqref{equ-513-3} and \eqref{equ-513-4}.
 Now \eqref{equ-0408-13.5} follows from \eqref{equ-513-8} at once.

 We next use \eqref{equ-0408-13.5} to show \eqref{equ-0408-13}.
 Indeed, since
 $$
({A}/{\tau})^{\frac{1}{b}}
<2\big[({A}/{\tau})^{\frac{1}{b}}\big]=2k(\tau),
$$
we find that
\begin{align*}
k(\tau)^b>\frac{A}{2^b\tau}\,\,\,\,\mbox{and}
\,\,\,\,2^{\frac1b+1}k(\tau)>\Big(\frac{2A}{\tau}\Big)^{\frac{1}{b}}
>k({\tau}/{2}),
\end{align*}
 which yields
 \begin{align*}
e^{-c_2\tau k(\tau)^b}<e^{-c_22^{-b}A}\,\,\,\,
\mbox{and}\,\,\,\,e^{-2c_1k(\tau)^a\gamma}<e^{-2c_1k({\tau}/{2})^a}.
\end{align*}
These, along with \eqref{equ-0408-13.5}
(where  $x:=e^{-2c_1k(\tau)^{a}}\in (0, 1)$) and \eqref{equ-0408-12},
lead to
 \eqref{equ-0408-13}.

 Hence the proof of \eqref{equ-0408-07} is completed.

\medskip

\emph{Step 2. We prove \eqref{equ-513-1} with the aid of
\eqref{equ-0408-07} and a telescopic series method.}

It deserves mentioning that the similar method has already been
successfully applied to obtain observability inequalities
in \cite{AEWZ,BP, luis1, luis2, M10,PW,WWZZ}.

Arbitrarily fix $t\in (0, \tau_0)$ and $\varphi\in L^2(\R^n)$. We set
\begin{align}\label{equ-0408-14}
t_j=2^{-j}t, \quad j\in\N.
\end{align}
Applying \eqref{equ-0408-07}, where $\tau=2^{-1}t_j$ and $\varphi$ is replaced by $e^{-t_{j+1}\tilde{H}}\varphi$, we get
\begin{align}\label{equ-0408-15}
& g(2^{-1}t_j)\big\|e^{-t_j \tilde{H}}\varphi\big\|^2_{L^2(\R^n)}
-g(2^{-1}t_{j+1})\big\|e^{-t_{j+1} \tilde{H}}\varphi\big\|^2_{L^2(\R^n)}\nonumber\\
&\leq \int_{2^{-1}t_{j+1}}^{2^{-1}t_j}
{\big\|e^{-(s+t_{j+1}) \tilde{H}}\varphi\big\|^2_{L^2(E)}\d s}+\alpha_02^{-1}t_j\|
\varphi\|^2_{L^2(\R^n)} \nonumber\\
&\leq   \int_{t_{j+1}}^{t_j}
{\big\|e^{-s\tilde{H}}\varphi\big\|^2_{L^2(E)}\d s}+\alpha_0\tau_02^{-(j+1)}\|
\varphi\|^2_{L^2(\R^n)}.
\end{align}
Summing up  \eqref{equ-0408-15} from $j=0$ to $j=m\in \N^+$ leads to
\begin{align}\label{equ-0408-16}
&g({t}/{2})\big\|e^{-t\tilde{H}}\varphi\big\|^2_{L^2(\R^n)}
-g(2^{-1}t_{m+1})\big\|e^{-t_{m+1} \tilde{H}}\varphi\big\|^2_{L^2(\R^n)} \nonumber\\
&\leq\int_{t_{m+1}}^{t}{\big\|e^{-s\tilde{H}}f\big\|^2_{L^2(E)}\d s}+\alpha_0\tau_0\big(1-2^{-(m+1)}\big)\|\varphi\|^2_{L^2(\R^n)}.
\end{align}
Meanwhile, three facts are given in order. First, it follows by \eqref{equ-513-5}
that
$$
g(t)\rightarrow 0,\;\;\mbox{as}\;\;t\rightarrow 0;
$$
 Second,
 $$
 t_{m+1}=\frac{t}{2^{m+1}}\rightarrow 0,\;\;\mbox{as}\;\;m\rightarrow\infty;
 $$
 Third,
  \begin{eqnarray*}
  \big\|e^{-t_{m+1} \tilde{H}}\varphi\big\|_{L^2(\R^n)}\leq M\|\varphi\|^2_{L^2(\R^n)},
  \;\;\mbox{when}\; m\in \N.
  \end{eqnarray*}
   Combining the above  facts together, and  sending
   $m\rightarrow\infty$ in \eqref{equ-0408-16}, we find that  when
  $t\in (0,\tau_0)$ and $\varphi\in L^2(\mathbb{R}^n)$,
  \begin{align}\label{equ-0408-17}
g({t}/{2})\big\|e^{-t\tilde{H}}\varphi\big\|^2_{L^2(\R^n)}
\leq\int_{0}^{t}{\big\|e^{-s\tilde{H}}\varphi\big\|^2_{L^2(E)}\d s}
+\alpha_0\tau_0\|\varphi\|^2_{L^2(\R^n)}.
\end{align}

Finally, we will get  \eqref{equ-513-1}  from \eqref{equ-0408-17}. Indeed,
by \eqref{equ-513-2} and \eqref{equ-513-4}, we have $\frac{A}{N}<\tau_0$.
Thus by \eqref{equ-0408-17}
(with  $t=\frac{A}{N}$) and \eqref{equ-0512-002}, we see that when
 $\varphi\in L^2(\mathbb{R}^n)$,
\begin{eqnarray}\label{equ-0408-18}
\big\|e^{-\frac{A}{N} H}\varphi\big\|^2_{L^2(\R^n)}
&=&e^{\frac{2A\delta_0}{N}}\big\|e^{-\frac{A}{N} \tilde{H}}\varphi\big\|^2_{L^2(\R^n)}\nonumber\\
&\leq&\frac{e^{\frac{2A\delta_0}{N}}}{g(\frac{A}{2N})}\int_{0}^{\frac{A}{N}}
{\big\|e^{-sH}\varphi\big\|^2_{L^2(E)}\d s}+\beta \|\varphi\|^2_{L^2(\R^n)},
\end{eqnarray}
where
$$
\beta:=\frac{8NM^2\alpha_0\tau_0}{A}\exp\Big\{2c_1(2N)^{\frac{a}{b}}
+\frac{2A\delta_0}{N}\Big\}.
$$
Meanwhile, by   \eqref{equ-513-3}, \eqref{equ-513-4},
\eqref{equ-513-6} and the fact:
$$
e^{\frac{2A\delta_0}{N}}\leq e^{c_22^{-(b+1)}A},
$$
 we deduce that
 $$
 0<\beta<1.
 $$
This, along with   \eqref{equ-0408-18},
leads to \eqref{equ-513-1} where
 $$
 \alpha=\sqrt{\beta}\in(0,1),\; T=\frac{A}{N},\; C
 =\sqrt{\frac{e^{\frac{2A\delta_0}{N}}}{g(\frac{A}{2N})}}.
 $$

 Hence, we finish the proof of Lemma \ref{lem-0408-01}.
\end{proof}

 \begin{remark}\label{rmk-stable-general}
Lemma \ref{lem-0408-01} can be extended into what follows: (The proof is very similar to that of Lemma \ref{lem-0408-01}, we omit the details.)

\vskip 5pt
 {\it Let $X$ and $U$ be two real Hilbert spaces. Let  $H$ be a self-adjoint operator on $X$ so that $(-H)$ generates a $C_0$ semigroup $\{e^{-tH}\}_{t\geq 0}$ on $X$. Let $B$ be a linear bounded operator
 from $U$ to $X$. Let  $\{\pi_{k}\}_{k\in \N^+}$ be a family of orthogonal projections  on $X$. If
 there are positive constants $c_1$, $c_2$, $a$, $b$ and $M$ so that for each $k\in\N^+$,
  \begin{eqnarray}\label{2.3-6-10gw}
\|\pi_k \varphi\|_{X}\leq e^{c_1k^a}\|B^*\pi_k \varphi\|_{U},\; \mbox{when}\;\;\varphi\in X,
\end{eqnarray}
and
  \begin{eqnarray}\label{2.4-6-11gw}
\big\|(1-\pi_k) \big(e^{-tH}\varphi\big)\big\|_{X}
\leq M e^{-c_2tk^b}\|\varphi\|_{X},\;\mbox{when}\;\;\varphi\in X, \,t>0, \end{eqnarray}
then
 there is $\alpha\in (0, 1)$, $T>0$ and $C>0$ so that
\begin{eqnarray}\label{2.5-6-12gw}
\big\|e^{-TH}\varphi\big\|_{X}
\leq C\Big(\int_0^T\big\|B^*e^{-tH}\varphi\big\|^2_{U}\d t\Big)^{1/2}
+ \alpha \|\varphi\|_{X}\; \;\mbox{for all}\;\; \varphi\in  X.
\end{eqnarray}
}

According to  (\cite[Theorem 1]{TWX}), the inequality \eqref{2.5-6-12gw}
is equivalent to the stabilization of the control system:
\begin{align*}
(\partial_t+H)y(t)=Bu(t),\,\,\, t\geq 0,\;\;\mbox{with}\;\;u\in L^2(0, \infty; U).
\end{align*}

\end{remark}

\section{Two spectral inequalities}\label{sec-2.2}

Generally, in applications of  Lemma \ref{lem-0408-01},
it is easier to verify the dissipative inequality \eqref{equ-0408-06}
than the spectral  inequality \eqref{equ-0408-05}.
 In this section, we present spectral inequalities for the shifted
  fractional Laplacian and the shifted Hermite operator respectively.
   They
 not only play  important roles  in the proofs of
 Theorem \ref{thm-frac-laplace} and Theorem \ref{thm-hermite},
 but also may have independent significance. We  start with
 introducing some spectral projections.

\subsection{Spectral projections}\label{spectral projections}

 First, we consider the operator $H$ given by \eqref{equ-frac}.
 Define a family of orthogonal projections $\{\pi_k\}_{1\leq k\in\R}$ in
 the following manner:
  For each $k\in\R$ with  $k\geq 1$, let
  $\pi_k: \varphi(\in L^2(\mathbb{R}^n))\rightarrow\pi_k
  \varphi(\in L^2(\mathbb{R}^n))$
  be given by
 \begin{align}\label{equ-0520-2}
\pi_k \varphi= \left\{
\begin{array}{ll}
\mathcal{F}^{-1}\Big( \chi(|\xi|^s-c\leq k)\widehat {\varphi}(\xi)\Big),
 \quad \,\,\,\,\text{if}\,\,k+c>0,\\[0.3cm]
0,\,\,\,\quad\quad\quad\quad\quad\quad
\quad\quad\quad\quad\quad\quad \text{if}\,\,k+c\le 0,
\end{array}
\right.
\end{align}
where $\chi(|\xi|^s-c\leq k)$ denotes the characteristic function
of the set $\{\xi\in\R^n: |\xi|^s-c\leq k\}$.
 It deserves mentioning that the above  $\pi_k$  is exactly the usual
  spectral projection  $P_{(-\infty, k]}(H)$, associated with $H$
  given by \eqref{equ-frac}.

 We next consider the operator $H$ given by \eqref{equ-herm}.
 Recall several known facts on the Hermite operator
 $H_0:=-\Delta+|x|^2$:

\vskip 5pt
   \noindent{\it Fact one (see  \cite{Ti})}$\;$ We have
   $\sigma(H_0)=\{2k+n,\,\,\,k\in \mathbb{N} \}$. Thus by the spectral theorem (see, e.g. , \cite[p. 412]{BS}),  we see
\begin{align}\label{equ-418-1.3}
H_0=-\Delta+|x|^2=\sum_{k=0}^{\infty}{(2k+n) P_k},
\end{align}
where $P_k$ denotes the orthogonal projection onto the linear space spanned by the
eigenfunctions of $H_0$ associated with the eigenvalue $2k+n$.

\vskip 5pt
\noindent {\it Fact two (see  \cite{Ti})} $\;$ For each $k\in \mathbb{N} $, let
\begin{align}\label{equ-418-1.4}
\varphi_k(x)=\left(2^kk!\sqrt{\pi}\right)^{-\frac12}H_{k}(x) e^{-\frac{x^2}{2}},\;x\in \mathbb{R},
\end{align}
where  $H_{k}$ is the Hermite polynomial  given by
\begin{eqnarray}\label{5.5,11-20}
H_{k}(x)=(-1)^k e^{x^2}\frac{\d^k}{\d x^k}\big(e^{-x^2}\big), \; x\in \mathbb{R}.
\end{eqnarray}
 For each multi-index $\alpha=(\alpha_1, \alpha_2,\ldots, \alpha_n)$
 ($\alpha_i\in \mathbb{N}$), we define the following
  Hermite function by the tensor product:
\begin{align}\label{5.6,10-27}
\Phi_{\alpha}(x)=\prod_{i=1}^n {\varphi_{\alpha_i}(x_i)},
\;\;x=(x_1,\dots,x_n)\in \mathbb{R}^n.
\end{align}
Then for each $\alpha\in \mathbb{N}^n$ with $|\alpha|=k$,
$\Phi_{\alpha}$  is an eigenfunction of $H_0$ corresponding to the eigenvalue $2k+n$, and
$\{\Phi_{\alpha}\;:\;\alpha\in \mathbb{N}^n\}$ forms a complete orthonormal
basis in $L^2(\mathbb{R}^n)$.

\vskip 5pt

 \noindent {\it Fact three} $\;$  The authors in  \cite{BJP} built up
  spectral inequalities for finite combinations of Hermite functions when $E$
   is (i) an open subset; (ii)
 a weakly thick set (see \eqref{equ-0422-1} for the definition); (iii) a thick set.

 \vskip 5pt

 We now define   a family of orthogonal projections $\{\pi_k\}_{k\in \N^+}$
 (associated with $H$ given by \eqref{equ-herm}) in the following manner:
  For each $k\in\N^+$,
  let $\pi_k: \varphi(\in L^2(\mathbb{R}^n))\rightarrow\pi_k
  \varphi(\in L^2(\mathbb{R}^n))$ be given by
 \begin{align}\label{equ-0520-4}
\pi_k \varphi= \left\{
\begin{array}{ll}
\sum_{0\leq j\leq (k+c-n)/2} P_j\varphi,
\quad\quad\quad\quad \,\,\,\,\,\text{if}\,\,k+c\geq n,\\[0.3cm]
0,\,\,\,\quad\quad\quad\quad\quad\quad
\quad\quad\quad\quad\quad\quad \text{if}\,\,k+c< n,
\end{array}
\right.
\end{align}
where  $P_j$ is given by
\eqref{equ-418-1.3}. The above $\pi_k$ is exactly the
usual spectral projection $P_{(-\infty, k]}(H)$,
associated with $H$ given by \eqref{equ-herm}.

\subsection{Spectral inequalities}

\begin{lemma}\label{lemm-spec-frac}
Let $H$ be given by \eqref{equ-frac} and let
$\{\pi_k\}_{1\leq k\in\R}$ be defined by  \eqref{equ-0520-2}.
If $E$ is a thick set in $\R^n$, then there is $C>0$
so that
\begin{align}\label{equ-514-1}
\|\pi_k \varphi\|_{L^2(\R^n)}
\leq e^{Ck^{\frac{1}{s}}}\|\pi_k \varphi\|_{L^2(E)},
 \;\;\mbox{when}\;\; \varphi\in L^2(\R^n)\;\;\mbox{and}\;\;k\ge 1.
\end{align}
\end{lemma}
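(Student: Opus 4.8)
The plan is to reduce \eqref{equ-514-1} to the quantitative Logvinenko--Sereda inequality, in the sharp form due to Kovrijkine, applied to functions whose Fourier transforms are supported in a ball. First I would use that the projection $\pi_k$ in \eqref{equ-0520-2} localizes the Fourier support: when $k+c>0$, the function $g:=\pi_k\varphi$ satisfies $\widehat g=\chi(|\xi|^s\le k+c)\,\widehat\varphi$, so $\widehat g$ is supported in the closed ball $B(0,(k+c)^{1/s})$; when $k+c\le 0$ we have $\pi_k=0$, and \eqref{equ-514-1} is trivial. Hence it suffices to estimate $\|g\|_{L^2(\R^n)}$ in terms of $\|g\|_{L^2(E)}$ for band-limited $g$, with an explicit constant depending on the radius $b:=(k+c)^{1/s}$.

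The key external input is the quantitative spectral estimate for thick sets (see the refinement of Kovrijkine, and its use in \cite{WWZZ,EV18}): if $E\subset\R^n$ is thick with parameters $(\gamma,L)$ as in \eqref{equ-set-thick}, then there is a dimensional constant $C(n)>0$ such that every $g\in L^2(\R^n)$ with $\widehat g$ supported in a cube of side-length $b$ obeys $\|g\|_{L^2(\R^n)}\le (C(n)/\gamma)^{C(n)(Lb+1)}\|g\|_{L^2(E)}$. Enclosing the ball $B(0,b)$ in the cube $Q_{2b}(0)$, I would apply this estimate to $g=\pi_k\varphi$ with $b=(k+c)^{1/s}$, obtaining $\|\pi_k\varphi\|_{L^2(\R^n)}\le (C(n)/\gamma)^{C(n)(2Lb+1)}\|\pi_k\varphi\|_{L^2(E)}$.

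It then remains to rewrite the right-hand constant in the claimed form $e^{Ck^{1/s}}$. Writing the constant as $\exp\{C(n)(2Lb+1)\ln(C(n)/\gamma)\}$, I would use that for $k\ge 1$ and $c\ge 0$ one has $k+c\le (1+c)k$, hence $b=(k+c)^{1/s}\le (1+c)^{1/s}k^{1/s}$, and also $1\le k^{1/s}$ since $s>0$ and $k\ge1$. Therefore $2Lb+1\le (2L(1+c)^{1/s}+1)k^{1/s}$, so the exponent is bounded by $Ck^{1/s}$ with $C:=C(n)(2L(1+c)^{1/s}+1)\ln(C(n)/\gamma)$, a constant depending only on $n,s,c,\gamma$ and $L$. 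This yields \eqref{equ-514-1}.

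The main obstacle is not the reduction above, which is essentially algebraic bookkeeping of the exponent, but the band-limited estimate itself: the delicate point is that the constant must depend on the Fourier support only through $e^{O(b)}$, that is, \emph{linearly} in the radius $b$ inside the exponent, since precisely this rate $k^{1/s}$ is what must later be balanced against the dissipative inequality \eqref{equ-0408-06} in Lemma \ref{lem-0408-01}. A cruder Logvinenko--Sereda bound with uncontrolled exponential dependence would not be usable here; obtaining the sharp linear-in-$b$ dependence is exactly Kovrijkine's contribution, and the one point requiring care is to check that its hypotheses (thickness with uniform parameters $\gamma,L$) are exactly those recorded in \eqref{equ-set-thick}.
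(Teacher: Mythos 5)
Your proposal is correct and follows essentially the same route as the paper's own proof: dispose of the trivial case $k+c\le 0$, note that $\widehat{\pi_k\varphi}$ is supported in the ball $B(0,(k+c)^{1/s})$, invoke the quantitative Logvinenko--Sereda/Kovrijkine theorem for thick sets, and absorb the resulting constant into $e^{Ck^{1/s}}$ using $k\ge 1$. The only cosmetic point is that your final bookkeeping assumes $c\ge 0$, while the lemma allows $c\in\R$; for $c<0$ (with $k+c>0$) one simply has $(k+c)^{1/s}\le k^{1/s}$, so that case is even easier and the argument is complete.
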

\begin{proof}
Let $s>0$ and $c\in \mathbb{R}$.
Arbitrarily fix  $k\in \mathbb{R}$ so that $k\geq 1$.
In the case that $k+c\leq 0$, \eqref{equ-514-1} is clearly true.
We now consider the case that $k+c>0$. By \eqref{equ-0520-2},
 we see that  the support of  $\widehat{\pi_k\varphi}$
 is contained in the ball $B(0,(k+c)^{1/s})$.
 Then by the Logvinenko-Sereda theorem (see \cite{HJ}, \cite[Theorem 1]{Kov}
 or  \cite[Lemma 2.1]{WWZZ}), we can find
  $C_1>0$ so that
\begin{align}\label{equ-0408-28}
\|\pi_k \varphi\|_{L^2(\R^n)}\leq
e^{C_1(1+(k+c)^{\frac{1}{s}})}\|\pi_k \varphi\|_{L^2(E)},
 \;\;\mbox{when}\;\;\varphi\in L^2(\R^n).
\end{align}
 Since $k\geq 1$, \eqref{equ-514-1} follows from  \eqref{equ-0408-28} at once.
 This completes the proof of Lemma \ref{lemm-spec-frac}.
 \end{proof}

\begin{lemma}\label{lemm-spec-herm}
Let $H$ be given by \eqref{equ-herm} and
let $\{\pi_k\}_{k\in\N^+}$ be given by \eqref{equ-0520-4}.
 If $E$ is a subset of positive measure in $\R^n$, then  there is $C>0$ so that
\begin{align}\label{equ-0520-5}
\|\pi_k \varphi\|_{L^2(\R^n)}\leq e^{\frac{n}{2}k\ln k+Ck}\|\pi_k \varphi\|_{L^2(E)},\;\;\mbox{when}\;\;\varphi\in L^2(\R^n)
\;\;\mbox{and}\;\;k\in \N^+.
\end{align}
\end{lemma}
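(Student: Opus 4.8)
The plan is to use that the range of $\pi_k$ consists of polynomial multiples of a fixed Gaussian, that such functions are concentrated on the ball $B(0,R)$ with $R\sim\sqrt k$, and that on this ball a Remez-type inequality upgrades the mere positivity of $|E|$ into the quantitative estimate, with the whole $\tfrac n2 k\ln k$ growth being produced by the volume of $B(0,R)$.

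First I would record the algebraic structure. By \emph{Fact two} and \eqref{equ-0520-4}, $f:=\pi_k\varphi=\sum_{|\alpha|\le N}c_\alpha\Phi_\alpha$ with $N:=[(k+c-n)/2]$, and since $\Phi_\alpha(x)=q_\alpha(x)e^{-|x|^2/2}$ with $\deg q_\alpha=|\alpha|$, we may write $f=P(x)e^{-|x|^2/2}$ for a polynomial $P$ with $\deg P\le N\le\tfrac12(k+c)$. Hence $\|f\|_{L^2(\R^n)}^2=\int_{\R^n}|P|^2e^{-|x|^2}\,\d x$, and similarly over $E$, so the claim becomes a weighted polynomial inequality.

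Next I would localize. As $f$ lies in the spectral subspace of $H_0=-\Delta+|x|^2$ below the eigenvalue $2N+n\le k+c$, we have $\int_{\R^n}|x|^2|f|^2\,\d x\le (H_0f,f)\le(k+c)\|f\|_{L^2(\R^n)}^2$, so Chebyshev's inequality gives $\int_{|x|>R}|f|^2\le\tfrac{k+c}{R^2}\|f\|_{L^2(\R^n)}^2$; with $R:=\sqrt{2(k+c)}\sim\sqrt k$ this yields $\int_{B(0,R)}|f|^2\ge\tfrac12\|f\|_{L^2(\R^n)}^2$. Fixing once and for all an $R_0$ with $\mu_0:=|E\cap B(0,R_0)|>0$ (possible since $|E|>0$), we have $|E\cap B(0,R)|\ge\mu_0$ for all large $k$.

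Then I would run Remez. On $B(0,R)$ I would invoke a multivariate $L^2$ Remez (Tur\'an--Nazarov) inequality
$$\int_{B(0,R)}|P|^2\,\d x\le\Big(\frac{A_n|B(0,R)|}{|e|}\Big)^{2N}\int_{e}|P|^2\,\d x,\qquad \deg P\le N,$$
applied with $e:=E\cap B(0,R)$, so $|e|\ge\mu_0$. Since $|B(0,R)|\sim R^n\sim k^{n/2}$ and $N\le\tfrac12(k+c)$, the prefactor is at most $\exp\{\tfrac n2 k\ln k+Ck\}$. I then reinstate the weight: the left side dominates $\int_{B(0,R)}|f|^2\ge\tfrac12\|f\|_{L^2(\R^n)}^2$, while on $e$ one has $e^{-|x|^2}\ge e^{-R^2}$, so $\int_e|P|^2\le e^{R^2}\int_e|f|^2\le e^{2(k+c)}\|f\|_{L^2(E)}^2$, and the factor $e^{R^2}=e^{Ck}$ is absorbed. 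This gives $\|f\|_{L^2(\R^n)}^2\le e^{\frac n2 k\ln k+Ck}\|f\|_{L^2(E)}^2$ for all large $k$, a bound even stronger than the square of \eqref{equ-0520-5}. The finitely many small $k$ are dispatched by finite-dimensionality of $\mathrm{ran}\,\pi_k$ together with real-analyticity: a nonzero finite combination of Hermite functions cannot vanish on the positive-measure set $E$, so $\|\cdot\|_{L^2(E)}$ is a genuine norm there and the estimate holds with a finite constant that is absorbed into $C$.

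I expect the real difficulty to be the quantitative multivariate $L^2$ Remez inequality and the bookkeeping that converts $|B(0,R)|\sim k^{n/2}$ and $\deg P\lesssim k$ into precisely the advertised $\tfrac n2 k\ln k$ growth (this is exactly where the positivity of $|E|$, rather than thickness, becomes usable, since $H_0$ is confining); by contrast the tail estimate, the calibration $R\sim\sqrt k$, and the restoration of the Gaussian weight are all routine.
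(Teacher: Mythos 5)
Your proposal is correct and takes essentially the same route as the paper's proof, which likewise writes $\pi_k\varphi$ as a polynomial times a Gaussian, localizes to a ball of radius $\sim\sqrt{k}$ (the paper cites \cite[Lemma 4.2]{BJP} where you use a spectral/Chebyshev argument), applies a Remez-type polynomial inequality on $E\cap B(0,R)$ (the paper's \cite[Lemma 4.4]{BJP} is exactly the Tur\'an--Nazarov-type bound you invoke), restores the Gaussian weight at cost $e^{Ck}$, and dispatches small $k$ by finite-dimensionality of the range of $\pi_k$. The only differences are cosmetic: your localization step is self-contained rather than cited, and your small-$k$ argument uses analyticity where the paper reuses the polynomial inequality.
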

\begin{proof}
We borrowed some idea from \cite{BJP}.  Arbitrarily fix
 $c\in \mathbb{R}$ and $k\in\N^+$. From  \eqref{equ-0520-4}, we see that
 it suffices to consider the non-trivial case
$$
k\ge k_0:=\max\{1, n-c\}.
$$
According to \eqref{equ-0520-4}, \eqref{equ-418-1.3},
\eqref{equ-418-1.4} and \eqref{5.6,10-27},  the range
 of $\pi_k$ is as
\begin{align}\label{equ-515-1}
\mathcal{E}_k= span\Big\{\Phi_{\alpha}:
 |\alpha|\le \frac{k+c-n}{2}, \;\alpha\in \mathbb{N}^n \Big\}.
\end{align}
To proceed, we recall the following two results from \cite{BJP}: First, it follows by
\cite[Lemma 4.2]{BJP} that  there is $c_n>0$ independent of $k$ so that
\begin{align}\label{equ-0419-1}
\|\varphi\|_{L^2(\R^n)}\leq \frac{2}{\sqrt{3}}
\|\varphi\|_{L^2(B(0, c_n\sqrt{k+c+1}))},
\;\;\mbox{when}\;\;\varphi\in \mathcal{E}_k.
\end{align}
(It deserves mentioning that though the above $\mathcal{E}_k$
 is slightly different from that in  \cite[Lemma 4.2]{BJP},
 the conclusion is  still true. This can be verified easily.)
Second, it follows from \cite[Lemma 4.4]{BJP} that
 if $\omega\subset \R^n$ satisfies $|\omega\bigcap B(0, R)|>0$,
 then each  polynomial  $P=P(x_1,\ldots,x_n)$, with
 $\deg P=d$,
  satisfies
\begin{align}\label{equ-0419-2}
\|P\|_{L^2(B(0, R))}\leq \frac{2^{2d+1}}{\sqrt{3}}
\sqrt{\frac{4|B(0, R)|}{|\omega\bigcap B(0, R)|}}F
\left(\frac{|\omega\bigcap B(0, R)|}
{|B(0, R)|}\right)\|P\|_{L^2(\omega\bigcap B(0, R))},
\end{align}
where
\begin{align}\label{equ-0419-3}
F(t):=\left(\frac{1+(1-\frac{t}{4})^{1/n}}
{1-(1-\frac{t}{4})^{1/n}}\right)^{d}, \;\;\; t\in (0,1].
\end{align}

We now prove \eqref{equ-0520-5}. Arbitrarily fix a
measurable subset $E\subset \mathbb{R}^n$, with  $|E|>0$.
Then there is $\varepsilon_0>0$ and $R_0>0$ so that
\begin{align}\label{equ-0419-4}
|E\bigcap B(0, R)|\ge\varepsilon_0,\;\;\mbox{when}\;\;R\ge R_0.
\end{align}
Meanwhile, it follows by \eqref{equ-418-1.4} that
for each  $\varphi\in\mathcal{E}_k$,
there is  a polynomial $P_k$, with $\deg P_k\leq k$, so that
\begin{align}\label{equ-0419-5}
\varphi(x)=P_k(x)e^{-\frac{|x|^2}{2}},\;\;x\in\R^n.
\end{align}
 Set
\begin{align}\label{const-k_1}
k_1:=\max\left\{k_0,\, ({R_0}/{c_n})^2-c\right\}.
\end{align}
There are only two possibilities  for the above fixed $k$: either $k> k_1$ or $k\leq k_1$.
We organize the rest of  the proof by two steps.

\vskip 5pt

\noindent{\it Step 1. We consider the case that $k> k_1$.}

  Given $\varphi\in\mathcal{E}_k$, we have
\begin{eqnarray}\label{equ-0419-6}
\|\varphi(x)\|_{L^2(\R^n)}&\leq& \frac{2}{\sqrt{3}}
\|P_k(x)\|_{L^2(B(0, c_n\sqrt{k+c+1}))}\nonumber\\
&\leq& C\varepsilon_0^{-\frac12}(k+c+1)^{\frac{n}{2}}F(t_k)
\|P_k\|_{L^2(E\bigcap B(0, c_n\sqrt{k+1}))}\nonumber\\
&\leq& C\varepsilon_0^{-\frac12}(k+c+1)^{\frac{n}{2}}
e^{\frac{c_n^2(k+c+1)}{2}}F(t_k)\|\varphi\|_{L^2(E)},
\end{eqnarray}
 where
\begin{align}\label{equ-0419-7}
t_k:=\frac{|E\bigcap B(0, c_n\sqrt{k+c+1})|}{|B(0, c_n\sqrt{k+c+1})|}\in(0,1].
\end{align}
In \eqref{equ-0419-6}, for the first inequality on Line 1,
  we used \eqref{equ-0419-1}, \eqref{equ-0419-5} and the
  inequality: $\|e^{-|\cdot|^2}\|_{L^{\infty}}\leq 1$
  (which is trivial);
for the second inequality on Line 2, we  used \eqref{equ-0419-2}
 and \eqref{equ-0419-4} (note that by \eqref{const-k_1}, one has $c_n\sqrt{k+c+1}> R_0$
when $k> k_1$); for the third  inequality on Line 3,
 we  used  \eqref{equ-0419-5} and the inequality:
$$
e^{\frac{|x|^2}{2}}\leq e^{\frac{c_n^2(k+c+1)}{2}},
\;\;\mbox{when}\;\;x\in B(0, c_n\sqrt{k+c+1}).
$$
We now estimate the upper bound of $F(t_k)$. By \eqref{equ-0419-3} where $d=k$,
 we have
 \begin{align}\label{equ-0419-8}
F(t_k)\leq 2^{k}\Big(1-\Big(1-\frac{t_k}{4}\Big)^{1/n}\Big)^{-k}.
\end{align}
To proceed, two facts are given in order. First,
  the following function is decreasing:
  $$
  t \mapsto \Big(1-\Big(1-\frac{t}{4}\Big)^{1/n}\Big)^{-k},\; t\in (0,1];
  $$
  Second, it follows by  \eqref{equ-0419-4} and \eqref{equ-0419-7} that
  $$
  c\varepsilon_0k^{-\frac{n}{2}}\le t_k\le 1,
  $$
   where $c=c(n)>0$ depends only on the dimension $n$.
Combining these facts together,  we obtain from \eqref{equ-0419-8} that
\begin{align}\label{equ-0419-8-1}
F(t_k)\leq 2^{k}\Big(1-\Big(1-\frac{c\varepsilon_0}{4}k^{-\frac{n}{2}}\Big)^{1/n}\Big)^{-k}\leq C^kk^{\frac{nk}{2}},
\end{align}
where $C=C(\varepsilon_0,n)>0$ is independent of $k$. Since
 $\pi_k\varphi\in \mathcal{E}_k$ for each $\varphi\in L^2(\R^n)$,
  it follows
  by \eqref{equ-0419-6} and \eqref{equ-0419-8-1} that
\begin{align}\label{equ-0408-23}
\|\pi_k \varphi\|_{L^2(\R^n)}\leq e^{\frac{n}{2}k\ln k+Ck}\|\pi_k \varphi\|_{L^2(E)},\;\;\mbox{when}\;\;\varphi\in L^2(\R^n),\;\; k>k_1,
\end{align}
where  $C>0$ is  independent of $k$.

\vskip 5pt

\noindent {\it Step 2. We consider the case that $k\leq k_1$.}

Since $|E|>0$, it follows by \eqref{equ-0419-2}
  that if $\varphi\in \mathcal{E}_{k}$ satisfies
  $\|\varphi\|_{L^2(E)}=0$,  then $\varphi= 0$ over $\mathbb{R}^n$.
This shows that $\|\cdot\|_{L^2(E)}$ is a norm on $\mathcal{E}_{k}$.
On the other hand, we see from  \eqref{equ-515-1} that
the subspace $\mathcal{E}_{k}$ is of finite dimension.
Hence,  the norm $\|\cdot\|_{L^2(E)}$ is equivalent to the norm
 $\|\cdot\|_{L^2(\R^n)}$. In particular, there is $C=C(k_1, E)>0$,
 independent of $k$, so that
\begin{eqnarray}\label{2.49-5-30wang}
\|\pi_k \varphi\|_{L^2(\R^n)}\leq C\|\pi_k \varphi\|_{L^2(E)},
\;\;\mbox{when}\;\;\varphi\in L^2(\R^n),\;\;k\leq k_1.
\end{eqnarray}

Finally, the spectral inequality \eqref{equ-0520-5} follows from \eqref{equ-0408-23}
and \eqref{2.49-5-30wang}. Thus, we finish the proof of Lemma \ref{lemm-spec-herm}.
\end{proof}

 \subsection{Comparison of  spectral inequalities}\label{subsec-3.3}
 This subsection concerns the difference between two spectral inequalities
 given by Lemma \ref{lemm-spec-frac} and Lemma \ref{lemm-spec-herm}
   respectively.

      We first consider the case that $H=H_0=-\Delta+|x|^2$,
      which is \eqref{equ-herm} with $c=0$.
      Let $\pi_k$, with $k\in\N^+$, be the projection
      (associated with  the above $H$)
     given by \eqref{equ-0520-4} where $c=0$. Let $E\subset \mathbb{R}^n$.
     With regard to the spectral inequality:
     \begin{align}\label{equ-0408-29}
     \|\pi_k \varphi\|_{L^2(\R^n)}\leq C(k,E)\|\pi_k
     \varphi\|_{L^2(E)}, \;\;\mbox{when}\;\; \varphi\in
     L^2(\R^n),\,\, k\geq 1,
     \end{align}
        the following interesting phenomena were revealed in
        the recent work \cite[Theorem 2.1]{BJP}:
        \begin{itemize}
          \item If $E$ is a non-empty open set, then one
          has \eqref{equ-0408-29} with  $C(k,E)=e^{Ck\ln k}$ for some $C=C(E,n)>0$;
          (It deserves mentioning that according to our Lemma \ref{lemm-spec-herm}, \eqref{equ-0408-29} with  $C(k,E)=e^{Ck\ln k}$ is also true when $E$ is any set of positive measure.)
          \item  If $E$ is a weakly thick set in the following sense:
        \begin{align}\label{equ-0422-1}
        \liminf_{R\rightarrow\infty}\frac{|E\bigcap B(0, R)|}{| B(0, R)|}>0,
        \end{align}
        then one has \eqref{equ-0408-29} with $C(k,E)=e^{Ck}$ for some constant $C=C(E,n)>0$;
          \item If $E$ is a thick set, then one has \eqref{equ-0408-29}
          with  $C(k,E)=e^{C\sqrt{k}}$ for some $C=C(E, n)>0$.
        \end{itemize}
                {\it From these, we see that  different geometry of $E$
                may lead to different growth order of  $C(k,E)$  in terms of $k$.}

                We next consider the case that  $H=(-\Delta)^{\frac{s}{2}}$
                with $s>0$, which is \eqref{equ-frac} with $c=0$.
                Let $\pi_k$, with $k\in\N^+$, be the projection (associated with $H$)
                defined by
                \eqref{equ-0520-2} where $c=0$.
                Let $E$ be a subset of $\mathbb{R}^n$.
                With regard to the spectral inequality
                 \eqref{equ-0408-29} in this case,
                we have what follows:
                 \begin{itemize}
                 \item According to the Logvinenko-Sereda theorem (see \cite [p.113] {HJ}),
                  the spectral inequality \eqref{equ-0408-29} holds
                 for some $C(k,E)$ if and only if $E$ is a thick set;
                 \item The above conclusion and Lemma \ref{lemm-spec-frac}
                  show further that the spectral inequality \eqref{equ-0408-29}
                 holds for $C(k,E)=e^{C(1+k^{\frac{1}{s}})}$
                 if and only if $E$ is a thick set.
                               \end{itemize}
                 {\it From these, we see that the geometry of $E$ can
                 influence the spectral inequality \eqref{equ-0408-29}
                 only in the manner: when $E$ is a thick set, \eqref{equ-0408-29} holds,
                  while when $E$ is not a thick set, \eqref{equ-0408-29} is not true.}

                 We now explain what causes the above-mentioned difference:
                 In the case when $H$ is given by \eqref{equ-herm} (with $c=0$),
                  $\mathcal{E}_k$ (the range of $\pi_k$ associated with $H$)
                  is of finite dimension, and is spanned by finite many Hermite functions.
                  Thus, for any subset $E$ of positive  measure, the norms
                  $\|\cdot\|_{L^2(\R^n)}$ and $\|\cdot\|_{L^2(E)}$ are equivalent.
                   This leads to the spectral inequality for any subset of positive measure.
                   On the other hand, in the case that $H=(-\Delta)^{\frac{s}{2}}$
                   with $s>0$, which is \eqref{equ-frac} with $c=0$,
                   the corresponding subspace  $\mathcal{E}_k$  is clearly not of finite dimension.
                    Moreover, it is translation invariant in the sense that
                    if $g(x)\in \mathcal{E}_k$, then $g(x-x_0)\in \mathcal{E}_k$
                     for all $x_0\in\R^n$ (since the support of $\widehat {g}(\cdot-x_0)$
                      is the same as that of $\widehat {g}(\cdot)$). Then by testing
                      the spectral inequality to the
                      function $g(\cdot-x_0)$, where $x_0$ is arbitrarily taken from $\R^n$,
                       one can prove that $E$ must satisfy
                       the  condition \eqref{equ-set-thick}.
                       These  will be exploited in detail later. (See
                   {\it Step 3} in the proof
                    of  Theorem \ref{thm-frac-laplace}.)

\section{Proof of main results}\label{sec-3}
  In this section, we will prove Theorem \ref{thm-frac-laplace}
  and Theorem \ref{thm-hermite} with the help of    Lemma \ref{lem-0408-01},
  Lemma \ref{lemm-spec-frac}
   and  Lemma \ref{lemm-spec-herm}.

\subsection{Proofs of Theorem \ref{thm-frac-laplace}}
First of all, we recall  that the operator $H$
given by  \eqref{equ-frac} is self-adjoint and
$(-H)$
generates an analytic semigroup. Now
we divide the proof into two steps.

\medskip

\noindent{\it Step 1. We show that $(i)\Rightarrow (ii)$.}

\medskip
 Suppose that   $E\subset \mathbb{R}^n$ is a thick set.
 To show $(ii)$, we  let  $H$ be given by \eqref{equ-frac}, with arbitrarily fixed $c\geq 0$ and $s>0$.
 Let $\pi_k$, with $k\in \N^+$, be the projection
 (associated with the aforementioned $H$) given by \eqref{equ-0520-2}.
  According to Lemma \ref{lemm-spec-frac}, the above
    $\{\pi_k\}_{k\in\N^+}$ and $E$ satisfy the spectral inequality \eqref{equ-0408-05},
 with  $a=1/s$ and  $c_1=C$ (where $C$ is given in \eqref{equ-514-1}).
 Meanwhile, by
  the Plancherel theorem, we see that for each $k\in\N^+$
  and each $\varphi\in L^2(\R^n)$,
\begin{eqnarray}\label{equ-0408-24-1}
\left\|(1-\pi_k) (e^{-tH}\varphi)\right\|_{L^2(\R^n)}
&=& \left\|\mathcal{F}^{-1}\left(\chi(|\xi|^s-c> k)e^{-t(|\xi|^{s}-c)}\widehat {\varphi}(\xi)\right)\right\|_{L^2(\R^n)}\nonumber\\
 &\leq& e^{-tk}\|\varphi\|_{L^2(\R^n)}, \quad t\geq 0,
\end{eqnarray}
 which leads to the dissipative inequality \eqref{equ-0408-06} with $M=b=c_2=1$.

Finally,  applying Lemma \ref{lem-0408-01} to the above $H$,
$\{\pi_k\}_{k\in\N^+}$ and $E$ leads to $(ii)$.

\medskip

\noindent{\it Step 2. We show that $(ii)$ $\Rightarrow (i)$.}

\medskip

Let $E\subset \mathbb{R}^n$ be measurable. According to  Theorem \ref{thm-txw} in Section \ref{sec-abstract},
it suffices to show what follows:

\noindent {\it Statement A. If there is $C>0$, $T>0$ and $\alpha\in (0,1)$ so that for each
$\varphi \in L^2(\R^n)$,
\begin{align}\label{equ-514-5}
 \left\|e^{-TH}\varphi\right\|_{L^2(\R^n)} \leq C \Big(\int_0^T
 \int_E\left|(e^{-tH}\varphi)(x)\right|^2\d x \d t\Big)^{1/2} + \alpha \|\varphi\|_{L^2(\R^n)},
\end{align}
where $H=(-\Delta)^{\frac{s}{2}}-c$ with some $s>0$ and $c\geq 0$,
 then $E$ is a thick set.}

Before proving {\it Statement A},  we mention that a similar statement (i.e., \eqref{equ-514-5}, with $\alpha=c=0,s=2$) is proved  in \cite[Theorem 1.1]{WWZZ}. The new ingredient here is that we shall add a parameter to eliminate the impact  of the term $\alpha \|\varphi\|_{L^2(\R^n)}$ in \eqref{equ-514-5}.

We now show {\it Statement A}. Suppose that there is $s>0$ and $c\geq 0$  so that  \eqref{equ-514-5}
holds for $H=(-\Delta)^{\frac{s}{2}}-c$.
 Given  $x_0\in \R^n$, we define  a function
\begin{align}\label{equ-413-1}
u(t,x;l):= e^{ct}(t+l)^{-\frac{n}{s}}g\bigg(\frac{x-x_0}
{(t+l)^{\frac{1}{s}}}\bigg), \quad t\geq 0, x\in \R^n,
\end{align}
where $l>0$ is a parameter (which will be determined later) and
 $g$  is the inverse Fourier transform of  $e^{-|\cdot|^{s}}$, i.e.,
\begin{align}\label{equ-A-2}
g(x)=\left(\mathcal{F}^{-1}e^{-|\cdot|^{s}}\right)(x)
=\frac{1}{(2\pi)^n}\int_{\R^n}{e^{ix\cdot\xi}e^{-|\xi|^{s}}\,
\mathrm d\xi},\;\; x\in \mathbb{R}^n.
\end{align}
Three facts on the above $g$ are given in order. First,
 it is clear that $g$ is a smooth function;
Second,  \cite[Theorem 2.1]{BG} gives the following point-wise estimate:
 for some $C_1>0$,
\begin{align}\label{equ-A-3}
|g(x)|\leq \frac{C_1}{(1+|x|^2)^{\frac{n+s}{2}}},\;\;x\in\R^n;
\end{align}
Third,  it follows from  \eqref{equ-A-2} that each $u(t,x;l)$
 (with $l>0$) satisfies the following  fractional heat equation:
$$
(\partial_{t}+(-\Delta)^{\frac{s}{2}})u(t,x;l) =cu(t,x;l),
 \; t>0, x\in \mathbb{R}^n; \;\;\; u(0,x;l) = \varphi(x;l),\; x\in \mathbb{R}^n,
$$
with
$$
 \varphi(x;l):= l^{-\frac{n}{s}}g\left(\frac{x-x_0}
 {l^{\frac{1}{s}}}\right),\;\;x\in \mathbb{R}^n.
$$
(It is clear that $\varphi(\cdot;l)\in L^2(\mathbb{R}^n)$.)

By a direct computation, we have that for some absolute constant $C_2>0$,
\begin{align}\label{equ-413-3}
\|u(t,\cdot;l)\|_{L^2(\R^n)} = C_2(t+l)^{-\frac{n}{2s}}e^{ct}, \;\;\mbox{when}\;\; t\geq 0,\; l>0.
\end{align}
Using  \eqref{equ-514-5} (where  $\varphi(\cdot)=\varphi(\cdot;l)$),
 \eqref{equ-413-3} and the identity:
\begin{align}\label{equ-0520-1}
\left(e^{-tH}\right)\varphi(x;l)=e^{ct}e^{-t(-\Delta)^{\frac{s}{2}}}\varphi(x;l),
\;\;l>0,\; t\ge0,\; x\in \mathbb{R}^n,
\end{align}
we deduce that when $l>0$
\begin{align}\label{equ-413-4}
 C_2(T+l)^{-\frac{n}{2s}}\leq Ce^{cT}\Big(\int_{0}^T\int_{E}\left|
 e^{-t(-\Delta)^{\frac{s}{2}}}\varphi(x;l)\right|^2\d x\d t\Big)^{1/2}
 + C_2\alpha l^{-\frac{n}{2s}}.
\end{align}
Let
\begin{align}\label{equ-413-5}
l_0 := \frac{T}{\left(\frac{2}{1+\alpha}\right)^{\frac{2s}{n}}-1}.
\end{align}
Then  a direct calculation leads to
$$
(T+l_0)^{-\frac{n}{2s}} -\alpha l_0^{-\frac{n}{2s}}
= \frac{1-\alpha}{2} l_0^{-\frac{n}{2s}}>0.
$$
This, along with \eqref{equ-413-4} (where $l=l_0$), yields
\begin{align}\label{equ-413-6}
C_3^2\leq  C^2e^{2cT}\int_{0}^T\int_{E}\left|e^{-t(-\Delta)^{\frac{s}{2}}}
\varphi(x; l_0)\right|^2\d x\d t,
\end{align}
where
$$
C_3:=\frac{C_2(1-\alpha)l_0^{-\frac{n}{2s}}}{2}>0.
$$
  Related to \eqref{equ-413-6}, we have the following observations: First, for each $L>0$,
\begin{multline}\label{equ-514-7}
 \int_{0}^T\int_{E}\left|e^{-t(-\Delta)^{\frac{s}{2}}}\varphi(x; l_0)\right|^2\d x\d t\\
  =  \int_{0}^T\int_{E\bigcap B^c(x_0,L)}\big|
  e^{-t(-\Delta)^{\frac{s}{2}}}\varphi(x; l_0)\big|^2\d x\d t
  +  \int_{0}^T\int_{E\bigcap B(x_0,L)}
  \big|e^{-t(-\Delta)^{\frac{s}{2}}}\varphi(x; l_0)\big|^2\d x\d t.
\end{multline}
Second, when
 $t\in [0,T]$ and $L>0$,
\begin{eqnarray}\label{equ-418-1}
\int_{B^c(x_0,L)}{\left|e^{-t(-\Delta)^{\frac{s}{2}}}\varphi(x; l_0)\right|^2\, \mathrm dx}&=&\int_{B^c(x_0,L)}{\left|e^{-ct}u(t,x; l_0)\right|^2\, \mathrm dx}\nonumber\\
&\leq& C_1^2(t+l_0)^{-\frac{2n}{s}}\int_{B^c(x_0,L)}
{\bigg(1+\frac{|x-x_0|^2}{(t+l_0)^{2/s}}\bigg)^{-n-s}\, \mathrm dx}\nonumber\\
&\leq & C_1^2(t+l_0)^{-\frac{2n}{s}}\int_{|y|> L/(t+l_0)^{1/s}}
{(1+|y|^2)^{-n-s}\, \mathrm dy}\nonumber\\
&\leq & C_1^2l_0^{-\frac{2n}{s}}\int_{|y|> L/(T+l_0)^{1/s}}{(1+|y|^2)^{-n-s}\, \mathrm dy}.
\end{eqnarray}
(In \eqref{equ-418-1},  for the  equality on Line 1, we used \eqref{equ-0520-1};
for the inequality on Line 2, we used \eqref{equ-413-1} and  \eqref{equ-A-3};
for the inequality on Line 3, we used the change of variable $x-x_0=(t+l_0)^{1/s}y$;
for the inequality on Line 4,
 we used the fact that  $0\le t\leq T$.) Third, by \eqref{equ-418-1}
 and the fact that  $(1+|\cdot|^2)^{-n-s}\in L^1(\R^n)$,
 we can choose $L_0=L_0(s, T, n, \alpha)>0$   so that
\begin{align}\label{equ-413-7}
C^2e^{2cT}\int_{0}^T\int_{E\bigcap B^c(x_0,L_0)}\left|
e^{-t(-\Delta)^{\frac{s}{2}}}\varphi(x;l_0)\right|^2\d x\d t  \leq \frac{C^2_3}{2}.
\end{align}
Now, combining \eqref{equ-413-6}, \eqref{equ-514-7} and \eqref{equ-413-7} together, we see
\begin{align}\label{equ-413-8}
 \frac{C_3^2}{2}\leq  C^2e^{2cT}\int_{0}^T\int_{E\bigcap B(x_0,L_0)}\left|e^{-t(-\Delta)^{\frac{s}{2}}}\varphi(x;l_0)\right|^2\d x\d t.
\end{align}
Meanwhile,  it follows from \eqref{equ-413-1}, \eqref{equ-A-3} and \eqref{equ-0520-1}   that when $x\in B(x_0,L_0)$ and $t\in(0, T)$,
\begin{align}\label{3.18-5-31w}
   \left|e^{-t(-\Delta)^{\frac{s}{2}}}\varphi(x;l_0)\right|
   =e^{-ct}|u(t,x;l_0)| \leq C_1l_0^{-\frac{n}{s}}.
\end{align}
From  \eqref{equ-413-8} and \eqref{3.18-5-31w}, we find
$$
 \frac{C_3^2}{2}\leq  \left|E\bigcap B(x_0,L_0)\right| T C^2C^2_1e^{2cT}l_0^{-\frac{2n}{s}}.
$$
This shows that for some $C_4>0$  independent of  $x_0\in \R^n$,
\begin{align}\label{4.18-6-8wang}
\left|E\bigcap B(x_0,L_0)\right|\ge C_4.
\end{align}
Because
$$
B(x_0,L_0)\subset Q_{L_1}(x_0)\;\;\mbox{with}\;\;L_1:=2L_0,
$$
we see from \eqref{4.18-6-8wang} that for some  $\gamma>0$,
$$
\left|E\bigcap Q_{L_1}(x_0)\right|\ge \gamma L_1^n.
$$
Since $x_0$ can be  arbitrarily taken from $\R^n$, the above shows that $E$ is a thick set in $\R^n$.

Hence, we finish the proof of Theorem \ref{thm-frac-laplace}.\qed

\subsection{Proof of Theorem  \ref{thm-hermite}}

\medskip
First of all,
we recall that  the operator $H$ given by \eqref{equ-herm}
is self-adjoint and $(-H)$ generates an analytic semigroup.
We organize the proof  by the following two steps:

\medskip

\noindent{\it Step 1. We show that $(i)\Rightarrow (ii)$.}

Suppose that $E\subset \mathbb{R}^n$ is a subset of positive measure.
Let $H$ be given by \eqref{equ-herm}, with an arbitrarily fixed $c\geq n$.
Let $\pi_k$, with $k\in\N^+$, be the projection (associated with the aforementioned $H$)
given by \eqref{equ-0520-4}.
 According to  Lemma \ref{lemm-spec-herm},  $\{\pi_k\}_{k\in\N^+}$ and
 $E$ satisfy the spectral inequality \eqref{equ-0408-05} for some  $a>1$ and  $c_1>0$.
Meanwhile, by
 the spectral theorem (see, e.g. , \cite[p. 412]{BS}) and \eqref{equ-0520-4}, it follows that
 when $\varphi\in L^2(\mathbb{R}^n)$ and $k\in\N^+$,
\begin{eqnarray}\label{equ-0419-0}
\big\|(1-\pi_k) (e^{-tH}\varphi)\big\|_{L^2(\R^n)}
&=& \big\|\sum_{j> (k+c-n)/2}^{\infty}e^{-(2j+n-c)t}P_j \varphi\big\|_{L^2(\R^n)}\nonumber\\
 &\leq& e^{-kt}\|\varphi\|_{L^2(\R^n)},\;\;t>0.
\end{eqnarray}
 From \eqref{equ-0419-0}, we see that $H$ and $\{\pi_k\}_{k\in\N^+}$  satisfy
 the dissipative inequality \eqref{equ-0408-06} with  $M=b=c_2=1$.

 Now, by applying
  Lemma \ref{lem-0408-01} to the above $H$, $\{\pi_k\}_{k\in\N^+}$ and $E$,
  we get $(ii)$.

\medskip

\noindent{\it Step 2. We show that $(ii)\Rightarrow (i)$.}

Suppose that $E$ is a {\it  stabilizable set} for   \eqref{equ-1} with \eqref{equ-herm} where $c\geq n$
 is arbitrarily fixed. Then by  Theorem \ref{thm-txw} in Section \ref{sec-abstract}, there is $\alpha\in(0,1)$, $T>0$ and  $C> 0$ so that
\begin{equation}\label{equ-0423-34}
\big\|e^{-TH}\varphi\big\|_{L^2(\R^n)}\leq C\Big(\int_{0}^T\int_{E}|e^{-tH}\varphi|^2\d x\d t\Big)^{1/2}+\alpha\|\varphi\|_{L^2(\R^n)}\;\;\mbox{for all}\;\;\varphi\in L^2(\R^n),
\end{equation}
where $H$ is given by \eqref{equ-herm}.
Meanwhile, it follows from  \eqref{5.6,10-27} that $\Phi_{0}$, defined by
$$
\Phi_{0}(x)=\pi^{n/4}e^{-\frac{|x|^2}{2}},\;\;x\in \mathbb{R}^n,
$$
is the $L^2$-normalized eigenfunction of the Hermite operator $H_0=-\Delta+|x|^2$ corresponding to the eigenvalue $\lambda=n$. Thus, we have
$$
\big(e^{-tH}\Phi_{0}\big)(x)=e^{-(n-c)t}\Phi_{0}(x), \quad \mbox{when}\,\, t\ge 0,\,  x\in \R^n.
$$
This, along with \eqref{equ-0423-34} where $\varphi=\Phi_{0}$, yields
\begin{equation}\label{equ-0423-35}
0<1-\alpha\leq e^{(c-n)T}-\alpha\leq C\pi^{n/4}T^{\frac12}e^{(c-n)T}\Big(\int_{E}{e^{-|x|^2}\d x}\Big)^{\frac12},
\end{equation}
which leads to  $|E|>0$.

Hence, we complete the proof of Theorem \ref{thm-hermite}.\qed

\begin{remark}\label{rmk-0430-1}
With the aid of the proof of Theorem \ref{thm-frac-laplace},
we will see that the condition $0<a<b$ in \eqref{equ-0408-06-1-1} cannot be dropped in general.
Here are two counterexamples:

\begin{itemize}
\item Let $H$ be given by \eqref{equ-frac} where $c=0$ and $0<s<1$. Let
$\pi_k$, with $k\in\N^+$, be the projection (associated with $H$) given by \eqref{equ-0520-2} (where $c=0$).
Let
 $$
 E:=\{x\in \R^n:\,\,|x|\ge 1\}.
 $$
 Clearly, it is   a thick set in $\R^n$.
 Moreover,  from {\it Step 1}  in the proof of Theorem \ref{thm-frac-laplace}, we see that in this case,
 \eqref{equ-0408-05} (with the above $E$ and  $a=\frac{1}{s}$) and   \eqref{equ-0408-06} (where $b=1$) are true. Thus we have $a>b>0$. However it was proved in \cite[Theorem 3]{Ko} that the above $E$ is not an {\it observable set} for \eqref{equ-1} with \eqref{equ-frac} where $0<s<1$ and $c=0$.

\item  Let $H$ be given by \eqref{equ-herm} where $c=0$. Let   $\pi_k$, with $k\in\N^+$, be
 the projection (associated with $H$)
 given by \eqref{equ-0520-4} (where $c=0$).  Let
 $$
 E:=\{x=(x_1,\dots,x_n)\in\R^n\;:\; x_1>0\}.
 $$
Then it follows from \cite[Theorem 2.1]{BJP} that  \eqref{equ-0408-05} (with the above $E$ and $a=1$)  and   \eqref{equ-0408-06} (with $b=1$) are true. Thus  we have  $a=b=1$ in this case. However it was proved in \cite[Theorem 1.10]{M09} that any half space is not an {\it observable set} for  \eqref{equ-1} with \eqref{equ-herm}.
\end{itemize}

Besides, we would like to mention what follows: In the critical case $a=b=1$, the observability inequality can still hold if one has  a further logarithmic assumption  in the spectral inequality \eqref{equ-0408-05}.
 We refer to \cite[Theorem 5]{DM} for this refined version of Lebeau-Robbiano strategy.
     \end{remark}

\section{ Proofs of Theorem \ref{thm-general-1} and  Theorem \ref{thm-general-2}}\label{sec-4}

In this section, we will  use different approaches to prove
Theorem \ref{thm-general-1} and  Theorem \ref{thm-general-2}.
These approaches provide explicit expressions for the feedback operator $K$  in \eqref{equ-stable-1}.

\subsection{Proof of Theorem \ref{thm-general-1}}

By  ($\textbf{b}_1$)  in Subsection \ref{concept},  Theorem \ref{thm-general-1} is an immediate consequence of the next Lemma \ref{thm-general}.

\begin{lemma}\label{thm-general}
 Assume that  $E$ is a thick set in $\R^n$.
 Suppose that $H=-\Delta+V(x)$ where $V$ satisfies {\it Condition I}.
   Then there is $\omega>0$ so that
$$
\big\|e^{-t(H+\chi_E)}\big\|_{\mathcal{L}(L^2(\R^n))}\leq  e^{-\omega t}
\;\;\mbox{for each}\;\; t\geq 0.
$$
\end{lemma}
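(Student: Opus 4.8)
The plan is to take the explicit feedback $K=-I$, so that the operator to be controlled is $H-\chi_E K=H+\chi_E$. Under \emph{Condition I} the operator $H=-\Delta+V$ is self-adjoint and bounded below (recalled in $(\textbf{a}_4)$), and since $\chi_E$ is a bounded, nonnegative, self-adjoint multiplication operator, $A:=H+\chi_E$ is again self-adjoint and bounded below with the same form domain. By the spectral theorem, the asserted bound $\|e^{-tA}\|_{\mathcal{L}(L^2(\R^n))}\le e^{-\omega t}$ for all $t\ge0$ is equivalent to $A\ge\omega I$, i.e.\ to the form coercivity
\begin{equation}\label{plan:gap}
(A\varphi,\varphi)=\int_{\R^n}|\nabla\varphi|^2\,\d x+\int_{\R^n}V|\varphi|^2\,\d x+\int_E|\varphi|^2\,\d x\ \ge\ \omega\|\varphi\|_{L^2(\R^n)}^2,
\end{equation}
for some $\omega>0$ and all $\varphi$ in the form domain; by density it suffices to prove \eqref{plan:gap} for $\varphi\in C_0^\infty(\R^n)$.

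First I would use \emph{Condition I} to absorb the negative part of $V$ into the Dirichlet energy: since $\int_{\R^n}V|\varphi|^2\,\d x\ge-\int_{\R^n}V_-|\varphi|^2\,\d x\ge-\delta\int_{\R^n}|\nabla\varphi|^2\,\d x$, we get
$$(A\varphi,\varphi)\ \ge\ (1-\delta)\int_{\R^n}|\nabla\varphi|^2\,\d x+\int_E|\varphi|^2\,\d x\ \ge\ (1-\delta)\Big(\int_{\R^n}|\nabla\varphi|^2\,\d x+\int_E|\varphi|^2\,\d x\Big).$$
As $\delta\in(0,1)$, \eqref{plan:gap} then reduces to a potential-free \emph{uncertainty inequality} for thick sets,
$$\|\varphi\|_{L^2(\R^n)}^2\ \le\ C\Big(\int_{\R^n}|\nabla\varphi|^2\,\d x+\int_E|\varphi|^2\,\d x\Big),\qquad \varphi\in C_0^\infty(\R^n),$$
after which $\omega=(1-\delta)/C$ works. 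This is the perturbative route announced in $(\textbf{f}_3)$: the potential has disappeared and only the Laplacian remains.

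Next I would prove this uncertainty inequality by a single low/high frequency splitting, which is where the spectral inequality for the Laplacian (the Logvinenko--Sereda theorem, i.e.\ Lemma \ref{lemm-spec-frac} with $s=2$ and $c=0$) enters. Fix a scale $\rho>0$ and set $P_\rho\varphi:=\mathcal{F}^{-1}(\chi_{B(0,\rho)}\widehat\varphi)$. By Plancherel the high part obeys $\|(I-P_\rho)\varphi\|_{L^2(\R^n)}\le\rho^{-1}\|\nabla\varphi\|_{L^2(\R^n)}$, while the low part has Fourier support in $B(0,\rho)$ and hence, by thickness of $E$, satisfies $\|P_\rho\varphi\|_{L^2(\R^n)}\le e^{C(1+\rho)}\|P_\rho\varphi\|_{L^2(E)}$. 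Bounding $\|P_\rho\varphi\|_{L^2(E)}\le\|\varphi\|_{L^2(E)}+\|(I-P_\rho)\varphi\|_{L^2(\R^n)}$ and combining, I obtain for any fixed $\rho$ (say $\rho=1$)
$$\|\varphi\|_{L^2(\R^n)}\le e^{C(1+\rho)}\|\varphi\|_{L^2(E)}+\big(1+e^{C(1+\rho)}\big)\rho^{-1}\|\nabla\varphi\|_{L^2(\R^n)},$$
and squaring gives the inequality with a finite constant.

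The main obstacle is precisely this uncertainty inequality; everything else is soft. A purely real-variable alternative, which avoids the spectral inequality altogether, would tile $\R^n$ by cubes of the thickness side-length $L$, run a local Poincar\'e--Wirtinger estimate on each cube, and use $|E\cap Q_L(x)|\ge\gamma L^n$ to replace the cube-average by a mass carried on $E\cap Q_L(x)$; summing over the tiling yields $\|\varphi\|_{L^2(\R^n)}^2\le C_1L^2\|\nabla\varphi\|_{L^2(\R^n)}^2+C_2\gamma^{-1}\|\varphi\|_{L^2(E)}^2$. I would keep the Fourier proof as the primary argument since it reuses the Laplacian spectral inequality already at hand. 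The remaining care points are standard functional analysis: that $C_0^\infty(\R^n)$ is a core for the form, and that the lower bound \eqref{plan:gap} transfers from the form to the operator and then to the semigroup --- all guaranteed once \emph{Condition I} ensures the form is closed and semibounded.
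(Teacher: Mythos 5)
Your proposal is correct and is essentially the paper's own argument: absorb $V_{-}$ into the Dirichlet energy using \emph{Condition I}, then obtain coercivity of the quadratic form of $H+\chi_E$ from a low/high Fourier frequency splitting in which the low frequencies are controlled by the Logvinenko--Sereda spectral inequality (Lemma \ref{lemm-spec-frac}) and the high frequencies by Plancherel. The only cosmetic differences are that you convert coercivity into semigroup decay via the spectral theorem for the self-adjoint operator $H+\chi_E$ instead of the paper's Hille--Yosida resolvent estimate, and that you package the splitting as a standalone uncertainty inequality valid for any fixed cutoff $\rho$, whereas the paper fixes the cutoff $N$ large enough that $(1-\delta)N^2-2>0$ and estimates the form directly.
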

\begin{proof}
Notice that  $H+\chi_E$ is closed and its domain is dense in $L^2(\R^n)$. Thus, according to
the Hille-Yosida theorem, it  suffices to show what follows:  There exists $\omega>0$ so that
\begin{align}\label{equ-425-1}
\|(H+\chi_E+\lambda)^{-1}\|_{\mathcal{L}(L^2(\R^n))}\leq \frac{1}{\lambda+\omega}\;\;\mbox{for each}\;\; \lambda\in  (-\omega, \infty).
\end{align}
To this end, we first claim that there exists $\omega>0$ so that
\begin{align}\label{equ-425-2}
  ((H+\chi_E)\varphi, \varphi) \geq \omega \int_{\R^n}|\varphi|^2\;\;\mbox{for each}\;\; \varphi\in C_0^\infty(\R^n).
\end{align}
 For this purpose,  three observations are given in order. First, since $V$ holds  {\it Condition I},
  there is  $\delta\in (0,1)$ so that
\begin{align}\label{equ-425-3}
  ((H+\chi_E)\varphi, \varphi) \geq (1-\delta)\int_{\R^n}|\nabla \varphi|^2\d x + \int_E |\varphi|^2\d x\;\;\mbox{for each}\;\; \varphi\in C_0^\infty(\R^n).
\end{align}
Second, given $N\geq 1$, let
  $\pi_{N}$  be the projection given by
  \eqref{equ-0520-2} where $s=1$, $c=0$, $k=N$. Then we have
\begin{align}\label{5.4-6-11wang}
\widehat{\pi_{N}\varphi}(\xi) = \chi_{\{|\xi|\leq N\}} \widehat{\varphi}(\xi),\;\;\; \xi\in \mathbb{R}^n, \;\;\varphi\in L^2(\mathbb{R}^n).
\end{align}
  Since $E$ is a thick set, by Lemma \ref{lemm-spec-frac} (where $H=(-\Delta)^{1/2}$
and $\pi_k$ is given by \eqref{equ-0520-2} with $s=1$, $c=0$, $k=N$), we
 can find   some  $C=C(E,n)>0$ so that for each $N\geq 1$,
\begin{align}\label{equ-425-4}
\|\pi_{N}\varphi\|_{L^2(\R^n)}\leq  e^{CN}\|\pi_{N}\varphi\|_{L^2(E)}\;\;\mbox{for each}\;\;  \varphi\in L^2(\R^n).
\end{align}
Third, let $N\ge 1$ satisfy
$$
\omega := \min \Big\{ (1-\delta)N^2-2, \; \frac{1}{2}e^{-2CN}\Big\}>0.
$$
Then we have that when $\varphi\in C_0^\infty(\R^n)$,
\begin{align} \label{equ-425-5}
\mbox{RHS}\; \eqref{equ-425-3} &\geq (1-\delta) \int_{\R^n}|(1-\pi_{N})\nabla \varphi|^2\d x  + \frac{1}{2}\int_{E}|\pi_{N}\varphi|^2\d x - 2\int_{E}|(1-\pi_{N})\varphi|^2\d x\nonumber\\
&\geq \Big((1-\delta)N^2-2\Big)\int_{\R^n}|(1-\pi_{N}) \varphi|^2\d x + \frac{1}{2}e^{-2CN}\int_{\R^n}|\pi_{N}\varphi|^2\d x\nonumber\\
&\geq \omega \int_{\R^n}|\varphi|^2\d x.
\end{align}
(Here, \mbox{RHS}\; \eqref{equ-425-3} denotes the right hand side of \eqref{equ-425-3}.)
In  \eqref{equ-425-5}, for the inequality on Line 1,  we used the inequality:
$$
\int_{\R^n}|\nabla\varphi|^2\d x=\int_{\R^n}|(1-\pi_{N}) \nabla\varphi|^2\d x+\int_{\R^n}|\pi_{N}\nabla \varphi|^2\d x \geq \int_{\R^n}|(1-\pi_{N}) \nabla\varphi|^2\d x
$$
and the inequality\footnote{It follows from  the fact that  $\varphi = \pi_{N}\varphi+(1-\pi_{N})\varphi$
and the elementary inequality:
$|a+b|^2\geq \frac{1}{2}|a|^2-2|b|^2$, with $a=\pi_{N}\varphi$ and $b=(1-\pi_{N})\varphi$.}:
\begin{eqnarray*}
\int_{E}|\varphi|^2\d x \geq \frac{1}{2}\int_{E}|\pi_{N}\varphi|^2\d x - 2\int_{E}|(1-\pi_{N})\varphi|^2\d x;
\end{eqnarray*}
 for  the inequality on Line 2,
 we used the fact
 $$
\int_{\R^n}|(1-\pi_{N}) \nabla\varphi|^2\d x=\int_{\R^n}\chi_{\{|\xi|\geq N\}}|\xi|^2|\hat{\varphi}|^2\d \xi \geq N^2\int_{\R^n}|(1-\pi_{N}) \varphi|^2\d x
$$
  and the spectral inequality \eqref{equ-425-4}.
 Now by  \eqref{equ-425-3} and \eqref{equ-425-5}, we get \eqref{equ-425-2}.

  Finally, by \eqref{equ-425-2}, we have that when  $\lambda >-\omega$ and $\varphi\in C_0^\infty(\R^n)$,
$$
0 \leq (\lambda+\omega) \int_{\R^n}|\varphi|^2 \leq ((H+\chi_E+\lambda)\varphi, \varphi) \leq \|(H+\chi_E+\lambda)\varphi\|_{L^2(\R^n)}\|\varphi\|_{L^2(\R^n)}.
$$
  This shows that when $\varphi\in C_0^\infty(\R^n)$,
$$
(\lambda+\omega)\|\varphi\|_{L^2(\R^n)}\leq \|(H+\chi_E+\lambda)\varphi\|_{L^2(\R^n)}.
$$
Using a density argument to the above leads to \eqref{equ-425-1}.
Thus we finish the proof of Lemma \ref{thm-general}.
\end{proof}

\subsection{Proof of Theorem \ref{thm-general-2}}

Suppose that $V$ satisfies  {\it Condition II}.
  Then
  the operator $H=-\Delta +V$ is self-adjoint and has compact resolvent in $L^2(\R^n)$ (see \cite{BS})).
    Thus,
 $$
 \sigma(H)=\{\lambda_j\}_{j\in\N^+}\;\mbox{with}\; \lambda_1\leq \lambda_2\cdots\leq\lambda_k\leq\cdots\rightarrow\infty.
 $$
 Let $\varphi_j$, with $j\in\N^+$, be the $L^2$-normalized eigenfunction of $H$ with respect to $\lambda_j$,
 i.e.,
 \begin{align}\label{equ-426-1}
H\varphi_j = (-\Delta+V)\varphi_j = \lambda_j \varphi_j\;\;\mbox{and}\; \|\varphi_j\|_{L^2(\mathbb{R}^n)}=1, \quad j=1,2,\cdots.
\end{align}
Then  $\{\varphi_j\}_{j\in\N^+}$ forms a complete orthonormal basis  in $L^2(\R^n)$.

If $\lambda_1>0$, then  the conclusion in Theorem \ref{thm-general-2}   is clearly true. Thus,
without loss of generality,  we can assume    $\lambda_1\leq 0$ from now on. Hence,
            there is $N\in\N^+$ so that
\begin{align}\label{equ-426-2}
 \lambda_1\leq \lambda_2\leq \cdots \leq \lambda_N\leq 0 < \lambda_{N+1} \leq \lambda_{N+2} \leq \cdots.
\end{align}

Before continuing with the proof of Theorem \ref{thm-general-2}, we give several lemmas
concerning with the eigenfunctions of $H$.

\begin{lemma}\label{lem-eigen-1}
Assume that $V$ satisfies  {\it Condition II}. Then each eigenfunction  of $H=-\Delta+V$ is
 continuous on $\R^n$.
\end{lemma}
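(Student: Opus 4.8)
The plan is to upgrade the regularity of each eigenfunction $\varphi_j$ by a standard interior elliptic bootstrap, starting from the distributional form of the eigenvalue equation and exploiting that $V$ is \emph{locally} bounded, so that only its local behavior matters for local continuity. Since $\varphi_j$ lies in the domain of the self-adjoint operator $H=-\Delta+V$, the identity \eqref{equ-426-1} holds in $L^2(\R^n)$, and therefore
\[
-\Delta\varphi_j=(\lambda_j-V)\varphi_j
\]
holds in the sense of distributions on $\R^n$. Because $V$ is locally bounded and measurable while $\varphi_j\in L^2(\R^n)$, the right-hand side $(\lambda_j-V)\varphi_j$ belongs to $L^2_{\mathrm{loc}}(\R^n)$; hence $\Delta\varphi_j\in L^2_{\mathrm{loc}}(\R^n)$ distributionally.

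First I would invoke interior elliptic regularity for the Laplacian: if $u\in L^2_{\mathrm{loc}}$ with $\Delta u\in L^2_{\mathrm{loc}}$, then $u\in H^2_{\mathrm{loc}}$ (via cutoff functions and difference quotients, or equivalently the mapping properties of $(I-\Delta)^{-1}$). Applied to $\varphi_j$, this yields $\varphi_j\in H^2_{\mathrm{loc}}(\R^n)$. When $n\le 3$ this already finishes the proof, since the Sobolev embedding $H^2_{\mathrm{loc}}\hookrightarrow C^0$ is valid in these dimensions. For $n\ge 4$ I would iterate: given $\varphi_j\in W^{2,p}_{\mathrm{loc}}$, the Sobolev embedding gives $\varphi_j\in L^{p'}_{\mathrm{loc}}$ with $\tfrac1{p'}=\tfrac1p-\tfrac2n$ (as long as this quantity stays positive); local boundedness of $V$ then upgrades the right-hand side to $(\lambda_j-V)\varphi_j\in L^{p'}_{\mathrm{loc}}$, and the Calder\'on--Zygmund interior estimates give $\varphi_j\in W^{2,p'}_{\mathrm{loc}}$. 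Each pass raises the integrability exponent by the fixed amount $\tfrac2n$, so after finitely many steps one reaches an exponent $q$ with $2-\tfrac nq>0$, whereupon Morrey's embedding $W^{2,q}_{\mathrm{loc}}\hookrightarrow C^{0,\alpha}_{\mathrm{loc}}$ forces $\varphi_j$ to agree a.e.\ with a (H\"older) continuous function on every ball, hence on all of $\R^n$.

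I do not expect a serious obstacle here, as the statement is a local regularity fact and all the ingredients are classical. The only points requiring care are: first, justifying that the eigenvalue equation \eqref{equ-426-1} holds in the distributional sense, which follows directly from $\varphi_j$ being in the operator (hence form) domain, so that $\varphi_j\in H^1_{\mathrm{loc}}$ and $V\varphi_j\in L^1_{\mathrm{loc}}$; and second, the bookkeeping of the bootstrap, ensuring the integrability exponent genuinely increases and terminates after finitely many steps. The divergence of $V$ at infinity plays no role whatsoever in this lemma---only its local boundedness is used---so no difficulty arises from the unboundedness of the potential.
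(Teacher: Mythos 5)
Your proof is correct, but it takes a genuinely different route from the paper. The paper disposes of this lemma in one line, citing Simon's Schr\"{o}dinger semigroups survey (Theorem C.2.4 there): that result, proved by semigroup/Kato-class techniques, gives continuity of eigenfunctions (indeed of all solutions of $Hu=\lambda u$) under hypotheses far weaker than local boundedness of $V$ --- local Kato class suffices. You instead run the classical Calder\'{o}n--Zygmund bootstrap: the distributional identity $-\Delta\varphi_j=(\lambda_j-V)\varphi_j$ with $V\in L^\infty_{loc}$ gives $\varphi_j\in H^2_{loc}$, and iterating Sobolev embedding against interior $W^{2,p}$ estimates raises the exponent by $2/n$ per pass until Morrey's embedding yields local H\"{o}lder continuity. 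Your argument is self-contained and elementary, at the price of being tied to $V\in L^\infty_{loc}$ and of some exponent bookkeeping (including the borderline case $\tfrac1p-\tfrac2n=0$, which you should resolve explicitly by dropping to a slightly smaller exponent); the paper's citation is shorter and covers rougher potentials. One further point you rightly flagged but should nail down: to get the distributional equation you need $C_0^\infty(\R^n)\subset D(H)$ (so that $(\varphi_j,H\psi)=\lambda_j(\varphi_j,\psi)$ for test functions $\psi$), which holds for the self-adjoint realization under \emph{Condition II} since $V$ is locally bounded and bounded below, e.g.\ by essential self-adjointness on $C_0^\infty(\R^n)$; with that remark included, your proof is complete.
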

\begin{proof}
This is a direct consequence of \cite[Theorem C.2.4, p.497]{Simon}.
\end{proof}

\begin{lemma} \label{ucp}
Suppose that  $V\in L^\infty_{loc}(\R^n)$.  Let $u\in H^2_{loc}(\R^n)$ be a solution to the equation:
$$
\Delta u = V(x)u  \quad \mbox{ in }\;\; \R^n.
$$
If $u$ vanishes on a nonempty open set, then $u$ is identically zero.
\end{lemma}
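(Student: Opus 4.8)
The plan is to reduce Lemma \ref{ucp} to a local unique continuation statement and then propagate the vanishing globally by a connectedness argument. Since $V\in L^\infty_{loc}(\R^n)$, on every ball $B\subset\R^n$ the solution obeys the pointwise differential inequality $|\Delta u(x)|=|V(x)u(x)|\le \|V\|_{L^\infty(B)}\,|u(x)|$ for a.e.\ $x\in B$. Thus $u$ falls within the scope of the classical unique continuation theory for second order elliptic operators with bounded zeroth order coefficients (Aronszajn--Cordes type results, or the Carleman estimate method of H\"ormander). The key local ingredient I would extract from this theory is \emph{unique continuation across a sphere}: if $w\in H^2_{loc}$ satisfies $|\Delta w|\le C|w|$ near a point $q$ lying on the boundary of a ball, and $w\equiv 0$ inside that ball, then $w\equiv 0$ in a full neighborhood of $q$.

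The substance of that local statement is a Carleman estimate, which I would record first. For a suitable radial weight $\phi$ and all $v\in C_0^\infty$ away from the weight's singularity, one has an inequality of the form
\[
\tau^{3}\int e^{2\tau\phi}|v|^2\,\d x \le C\int e^{2\tau\phi}|\Delta v|^2\,\d x,\qquad \tau\ge\tau_0,
\]
in which a positive power of $\tau$ dominates the contribution of the zeroth order term. Applying this to $v=\eta u$ for a cutoff $\eta$ (justified for $u\in H^2_{loc}$ by approximation), one uses $|\Delta u|\le C|u|$ to bound the main term on the right by $C^2\int e^{2\tau\phi}|\eta u|^2\,\d x$ and absorbs it into the left-hand side once $\tau$ is large; the remaining terms are supported in the transition region of $\eta$, where the weight is arranged to be smaller. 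Choosing $\phi$ adapted to a sphere through $q$ — whose nonvanishing curvature supplies the convexity needed for the estimate — forces $u$ to vanish on one side, and this yields vanishing of $u$ near $q$.

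With the local statement in hand, the global conclusion follows cleanly. Let $U$ be the interior of the zero set of $u$, that is, the largest open subset of $\R^n$ on which $u\equiv0$; by hypothesis $U\neq\emptyset$, and $U$ is open by construction. Suppose, for contradiction, that $U\neq\R^n$. Fix $p\in U$ and set $\rho:=\mathrm{dist}(p,\R^n\setminus U)\in(0,\infty)$, so that $B(p,\rho)\subset U$ while $\partial B(p,\rho)$ meets $\R^n\setminus U$ at some point $q\in\partial U$. Since $u\equiv0$ on $B(p,\rho)$, unique continuation across the sphere $\partial B(p,\rho)$ at $q$ forces $u\equiv0$ in a neighborhood of $q$, whence $q\in U$, contradicting $q\in\partial U$. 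Therefore $U=\R^n$ and $u$ is identically zero, using only that $\R^n$ is connected.

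The main obstacle is the Carleman estimate underlying the local step, specifically arranging the weight so that the geometry of the sphere produces the required gain in $\tau$ and lets the bounded potential term $Vu$ be absorbed; the reduction to $|\Delta u|\le C|u|$ and the propagation argument are routine by comparison. If a self-contained derivation is not wanted, the local unique continuation across a sphere may instead be quoted directly from the unique continuation literature, after which only the connectedness argument above need be supplied.
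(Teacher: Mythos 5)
Your proposal is correct, but it is structured differently from what the paper does: the paper's entire proof is a citation to Schechter--Simon \cite{SS80}, whose theorem is already the \emph{global} statement (unique continuation on all of $\R^n$, in fact for potentials far more general than $L^\infty_{loc}$, including suitably unbounded ones). You instead quote (or sketch) only a \emph{local} ingredient --- unique continuation across a sphere for $|\Delta w|\le C|w|$, via a Carleman estimate --- and then supply the globalization yourself with the ballooning argument: take the largest open set $U$ where $u\equiv 0$, inflate a ball inside $U$ until it touches $\partial U$ at some $q$, and use the local result to push the zero set past $q$. That propagation argument is sound (the distance $\rho$ is positive and finite under the contradiction hypothesis, the contact point $q$ exists by closedness of $\R^n\setminus U$, and vanishing near $q$ contradicts $q\notin U$), and your reduction to the differential inequality via local boundedness of $V$ is exactly right. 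What each route buys: the paper's citation is shorter and its quoted theorem is strictly stronger (it would cover unbounded $V$, which the paper does not need); your route makes transparent that for $V\in L^\infty_{loc}$ only the classical Aronszajn--Cordes/Carleman theory for bounded zeroth-order perturbations of $\Delta$ is required, and it isolates the one genuinely elementary step (the connectedness argument) from the one deep step (the Carleman estimate). Note that your Carleman sketch is not itself a complete proof --- the pseudoconvex weight construction and the absorption of cutoff errors are only outlined --- so in either version the deep local result is ultimately quoted from the literature, just as the paper quotes the global one; you acknowledge this explicitly, which is the right call.
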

\begin{proof}
See, e.g., \cite{SS80}.
\end{proof}

To state the next lemma, we introduce the following notation:
Given  a nonempty open set $E\subset\R^n$,
we define the matrix
\begin{align}\label{equ-51-4}
\mathbb{A} := \mathbb{A}_E= \left(
\begin{array}{cccc}
(\varphi_1,\varphi_1)_E & (\varphi_1,\varphi_2)_E & \cdots & (\varphi_1,\varphi_N)_E\\
(\varphi_2,\varphi_1)_E & (\varphi_2,\varphi_2)_E & \cdots & (\varphi_2,\varphi_N)_E\\
\vdots & \vdots & \cdots & \vdots\\
(\varphi_N,\varphi_1)_E & (\varphi_N,\varphi_2)_E & \cdots & (\varphi_N,\varphi_N)_E
\end{array}
\right),
\end{align}
where \begin{align}\label{equ-51-5}
(\varphi_i,\varphi_j)_E := \int_{E}\varphi_i(x) \varphi_j (x)\d x, \quad 1\leq i,j\leq N.
\end{align}
(Recall  $\varphi_j$, with $j\in\N^+$, is given by \eqref{equ-426-1}.)

\begin{lemma}\label{lem-invert}
Suppose that $V$ satisfies  {\it Condition II}.
Let $\mathbb{A}$ be given by \eqref{equ-51-4}, associated with a  nonempty open subset $E\subset\R^n$. Then
\begin{eqnarray}\label{5.10-6-1wang}
\det \mathbb{A} \neq 0.
\end{eqnarray}
\end{lemma}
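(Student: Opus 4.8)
The plan is to read $\mathbb{A}_E$ as a Gram matrix and convert the nonvanishing of its determinant into a linear independence statement for the restricted eigenfunctions. Since the $\varphi_j$ are real-valued, for every $\mathbf{c}=(c_1,\dots,c_N)\in\R^N$ one has $\mathbf{c}^{\!\top}\mathbb{A}\,\mathbf{c}=\int_E\big|\sum_{j=1}^N c_j\varphi_j\big|^2\d x$, so $\mathbb{A}$ is symmetric positive semidefinite and $\det\mathbb{A}=0$ holds if and only if there is a nonzero $\mathbf{c}$ for which $u:=\sum_{j=1}^N c_j\varphi_j$ vanishes a.e.\ on $E$. Thus to prove \eqref{5.10-6-1wang} it suffices to show that any finite combination of the first $N$ eigenfunctions which vanishes on the nonempty open set $E$ must have all $c_j=0$. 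I would argue by contradiction, assuming $u\neq 0$ in $L^2(\R^n)$ while $u=0$ a.e.\ on $E$.

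The main difficulty is that $u$ is a combination of eigenfunctions with possibly different eigenvalues, so it does not itself solve a single equation of the form $Hu=\lambda u$, and hence Lemma \ref{ucp} cannot be applied to $u$ directly. To get around this, I would first show that every power $H^k u$ also vanishes on $E$, and then use a polynomial of $H$ to isolate the part of $u$ lying in a single eigenspace. In detail: since $u\in H^2_{loc}(\R^n)$ vanishes a.e.\ on the open set $E$, all of its weak derivatives vanish there, so $\Delta u=0$ and $Vu=0$ a.e.\ on $E$, giving $Hu=-\Delta u+Vu=0$ a.e.\ on $E$. As $Hu=\sum_j c_j\lambda_j\varphi_j$ is again a finite combination of eigenfunctions lying in $H^2_{loc}(\R^n)$, the same reasoning applies to it, and by induction $H^k u=0$ a.e.\ on $E$ for every $k\in\N$. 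Consequently $p(H)u=0$ a.e.\ on $E$ for any polynomial $p$.

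Now let $\mu_1<\dots<\mu_m$ be the distinct values among $\lambda_1,\dots,\lambda_N$, and write $u=\sum_{l=1}^m u_l$, where $u_l$ is the part of $u$ in the $\mu_l$-eigenspace, so that $Hu_l=\mu_l u_l$. Choosing $p_l(z):=\prod_{l'\neq l}(z-\mu_{l'})$ gives $p_l(H)u=p_l(\mu_l)\,u_l$ with $p_l(\mu_l)\neq 0$, whence $u_l=0$ a.e.\ on $E$ for each $l$. Each $u_l$ solves $\Delta u_l=(V-\mu_l)u_l$ with $V-\mu_l\in L^\infty_{loc}(\R^n)$, so Lemma \ref{ucp} forces $u_l\equiv 0$ on $\R^n$. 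Since $u_l$ is a combination of the mutually orthonormal $\varphi_j$ with $\lambda_j=\mu_l$, this yields $c_j=0$ for all such $j$; letting $l$ range over $1,\dots,m$ gives $\mathbf{c}=0$, the desired contradiction, so $\det\mathbb{A}\neq 0$. The continuity of the eigenfunctions from Lemma \ref{lem-eigen-1} may be invoked if one prefers pointwise rather than almost-everywhere vanishing, though it is not strictly needed above. I expect the crux to be precisely the two coupled points: verifying that $H^ku$ vanishes on $E$ for all $k$, and invoking Lemma \ref{ucp} correctly on each single-eigenspace piece $u_l$.
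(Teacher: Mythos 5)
Your proof is correct and follows essentially the same route as the paper's: argue by contradiction, interpret $\det\mathbb{A}=0$ as the vanishing on $E$ of a nontrivial combination $u=\sum_j c_j\varphi_j$, isolate the single-eigenvalue components of $u$ using the distinctness of the eigenvalues, and apply the unique continuation result (Lemma \ref{ucp}) to each component. The only differences are cosmetic: the paper isolates the components by repeatedly applying $(-\Delta+V)$ on the open set $E$ (invoking continuity from Lemma \ref{lem-eigen-1}) and then inverting a Vandermonde matrix, whereas you use Lagrange-type polynomials $p_l(H)$ together with a weak-derivative argument that bypasses continuity altogether --- the same algebra in slightly different clothing, and your version also absorbs the paper's separate ``all eigenvalues equal'' case into the general one.
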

\begin{remark} $(i)$  In a slightly different setting,
Lemma \ref{lem-invert} was given in \cite[p.42]{B}, without proof.
 For the  reader's convenience, we provide a detailed proof here.
 $(ii)$ We mention that a  quantitative lower bound for the
 norm of the matrix $\mathbb{A}$
 (where $V=0$)  is obtained in \cite[Lemma 3.1]{LW} with the aid of
 the
  Lebeau-Robbiano spectral inequality. However, for the current case,  we don't
        have the corresponding spectral inequality.
         (At least, we do not find it in any published paper.)
         \end{remark}

\begin{proof}[Proof of Lemma \ref{lem-invert}.]
By  contradiction, we suppose  that  \eqref{5.10-6-1wang}  is not true. Then
 there exists $\textbf{0}\neq z=(z_1,z_2,\cdots,z_N)^T \in \R^N$
 so that $\mathbb{A} z =\textbf{0}$, which gives
\begin{align}\label{equ-51-6}
z^T\mathbb{A} z =0.
\end{align}
By \eqref{equ-51-4} and \eqref{equ-51-5}, we can rewrite \eqref{equ-51-6} as
\begin{align}\label{equ-51-8}
 \int_E \Big| \sum_{1\leq j\leq N}z_j \varphi_j(x)  \Big|^2 \d x=0.
\end{align}
  Meanwhile, according to Lemma \ref{lem-eigen-1}, the function
   $ \sum_{1\leq j\leq N}z_j \varphi_j(\cdot)$ is continuous on $\R^n$. This, together with \eqref{equ-51-8}, leads to
\begin{align}\label{equ-51-9}
\sum_{1\leq j\leq N}z_j \varphi_j(x) =0, \quad  \mbox{when}\;\; x\in E.
\end{align}
With respect to $\lambda_1,\dots, \lambda_N$, there are only two possibilities:
 either they are the same or  at least two of them are different.

  In the first case that  they are the same, we have
   $\lambda_1=\lambda_2=\cdots =\lambda_N=\lambda$ for some $\lambda\in \R$.
   Then
   $$
   (-\Delta +V)\varphi_j = \lambda \varphi_j\;\;\mbox{over}\;\;\mathbb{R}^n\;\;
   \mbox{for all}\;\;j=1,\dots,N,
   $$
      consequently,
\begin{eqnarray}\label{5.14-6-1wang}
(-\Delta +V)\Big(\sum_{1\leq j\leq N}z_j \varphi_j\Big)=\lambda \Big(\sum_{1\leq j\leq N}z_j \varphi_j\Big) \quad \mbox{over}\;\; \R^n.
\end{eqnarray}
Since $\sum_{1\leq j\leq N}z_j \varphi_j\in H_{loc}^2(\R^n)$ and because
$\sum_{1\leq j\leq N}z_j \varphi_j=0$ over $E$ (see \eqref{equ-51-9}),
by   \eqref{5.14-6-1wang}, we can apply   Lemma \ref{ucp} to conclude that
\begin{align}\label{equ-51-12}
\sum_{1\leq j\leq N}z_j \varphi_j=0  \quad  \mbox{over}\;\; \R^n.
\end{align}
Meanwhile, since  $\varphi_1,\dots,\varphi_N$  are linearly independent,
it follows from \eqref{equ-51-12}  that $z_1 =z_2=\cdots=z_N=0$,
 which contradicts $z\neq \textbf{0}$.

 In the second case that at least two among $\lambda_1,\dots, \lambda_N$ are different, we can let
      $\mu_1,\cdots,\mu_\ell$ (with $\ell\in [2,N]$ and $\mu_1<\cdots<\mu_l$) be all distinct values among
       $\{\lambda_1,\cdots,\lambda_N\}$.
       Then by \eqref{equ-426-2}, we can decompose  $\{\lambda_1,\cdots,\lambda_N\}$ as $\ell$ groups:
$$
\begin{array}{cccc}
\underbrace{\lambda_{j_0+1}=\cdots=\lambda_{j_1}}&
<\underbrace{\lambda_{j_1+1}=\cdots=\lambda_{j_2}}&<\cdots &< \underbrace{\lambda_{j_{\ell-1}+1}=\cdots=\lambda_{j_\ell}}\\
=\mu_1 & =\mu_2 & \cdots & =\mu_\ell
\end{array}
$$
where $j_0=0$ and $j_\ell=N$. Thus, we can rewrite  \eqref{equ-51-9} as:
\begin{align}\label{equ-54-1}
\sum_{k=1}^\ell \phi_k =0 \quad  \mbox{over} \;\; E,
\end{align}
with
\begin{align}\label{equ-54-2}
\phi_{k} = \sum_{j=j_{k-1}+1}^{j_k} z_j\varphi_j, \quad k=1,2,\cdots,\ell.
\end{align}
Since $\varphi_j$,
$j=j_{k-1}+1,\dots, j_k$,  are the eigenfunctions
 corresponding to the same eigenvalue $\mu_k$, we deduce from \eqref{equ-54-2} that for $k=1,2,\cdots,\ell$,
\begin{align}\label{equ-54-3}
(-\Delta+V)\phi_{k} = \mu_k \phi_{k} \quad \mbox{over}\;\; E.
\end{align}
Since $E$ is open, acting $(-\Delta+V)$ on both sides of \eqref{equ-54-1}, using \eqref{equ-54-3} and the continuity of $\phi_k$, we obtain that
\begin{align}\label{equ-54-4}
\mu_1\phi_1 + \cdots +\mu_\ell \phi_\ell=0 \quad  \mbox{over}\;\; E.
\end{align}
Similarly, we can also obtain that when $m=0,1,\dots,\ell-1$,
\begin{align}\label{equ-54-5}
\mu_1^m\phi_1 + \cdots +\mu_\ell^m \phi_\ell=0 \quad  \mbox{over}\;\; E.
\end{align}
Since $\mu_1,\cdots,\mu_\ell$ are distinct,     we have
$$
\det \left(
\begin{array}{cccc}
1   &  1   & \cdots &   1     \\
\mu_1 & \mu_2 & \cdots & \mu_\ell\\
\vdots & \vdots & \cdots & \vdots\\
\mu_1^{\ell-1} & \mu_2^{\ell-1} & \cdots & \mu_\ell^{\ell-1}
\end{array}
\right)\neq 0.
$$
(The left hand side is   the Vandermonde determinant.)
This, together with \eqref{equ-54-5}, gives
\begin{align}\label{equ-54-6}
 \phi_1 =  \cdots =\phi_\ell=0 \quad  \mbox{over}\;\;  E.
\end{align}
Now, by  \eqref{equ-54-2} and \eqref{equ-54-6}, we can use the similar method as that used in
the first case to see that
$z_1=z_2=\cdots=z_N=0$, which  contradicts $z\neq \textbf{0}$.

 Hence, we end the proof of   Lemma \ref{lem-invert}.
\end{proof}

We now return to the proof of Theorem \ref{thm-general-2}. Let  $E\subset \R^n$ be a non-empty open subset. According to Lemma \ref{lem-invert},    the matrix $\mathbb{A}$ defined by \eqref{equ-51-4} is invertible.
 Thus, we can define an operator $K: \psi\in L^2(\mathbb{R}^n)\rightarrow K\psi\in L^2(\mathbb{R}^n)$ by
 \begin{align}\label{equ-51-14}
(K\psi)(x):= \rho \left\langle \mathbb{A}^{-1}\left(
\begin{array}{c}
(\psi,\varphi_1)\\
(\psi,\varphi_2)\\
\cdots\\
(\psi,\varphi_N)
\end{array}
\right),
\left(
\begin{array}{c}
 \varphi_1(x) \\
 \varphi_2(x) \\
\cdots\\
 \varphi_N(x)
\end{array}
\right)  \right\rangle_{\R^N}\;\;\mbox{for each}\;\;x\in \mathbb{R}^n,
\end{align}
where $\rho =\lambda_1-1$.
Consider the closed-loop system:
\begin{align}\label{equ-51-heat}
y_t-\Delta y+Vy = \chi_E K y, \quad t>0;\; \qquad y(0,\cdot)\in L^2(\R^n).
\end{align}
We will treat any solution  to \eqref{equ-51-heat} as a function from $[0,\infty)$ to $L^2(\mathbb{R}^n)$,
denoted by $y(t), t\geq 0$.

According to ($\textbf{b}_1$)  in Subsection \ref{concept},
 to get  Theorem \ref{thm-general-2}, it suffices to show the following two assertions:

\vskip 5pt
\noindent {\it Assertion 1} $K$ is a linear bounded  operator  on  $L^2(\R^n)$.

\noindent {\it Assertion 2}
   There is $C>0$ and $\omega>0$ so that each solution $y$ to \eqref{equ-51-heat} satisfies
  $$
  \|y(t)\|_{L^2(\R^n)} \leq Ce^{-\omega t}\|y(0)\|_{L^2(\R^n)}\;\;\mbox{for each}\;\; t\geq 0.
  $$
\vskip 5pt

We first show {\it Assertion 1}. By \eqref{equ-51-14}, it is clear that $K$ is linear.
 Since $\varphi_j$, $j=1,\dots,N$, are  orthonormal, it follows from \eqref{equ-51-14}
 that
\begin{align}\label{equ-54-10}
\|K\psi\|_{L^2(\R^n)} &= |\rho|\left\|\mathbb{A}^{-1}\left(
\begin{array}{c}
(\psi,\varphi_1)\\
(\psi,\varphi_2)\\
\cdots\\
(\psi,\varphi_N)
\end{array}
\right)\right\|_{l^2(\R^N)}\nonumber\\
&\leq |\rho| \|\mathbb{A}^{-1}\|\left( \sum_{j=1}^N| (\psi,\varphi_j)|^2 \right)^{1/2} \\
&\leq |\rho| \|\mathbb{A}^{-1}\|\|\psi\|_{L^2(\R^n)}
\;\;\mbox{for each}\;\;\psi\in L^2(\mathbb{R}^n),  \nonumber
\end{align}
which shows that $K$ is bounded. This leads to {\it Assertion 1}.

We next show {\it Assertion 2}.
 Taking the inner product (in $L^2(\R^n)$) in \eqref{equ-51-heat} with $\varphi_j$, $1\leq j\leq N$, we find
\begin{align}\label{equ-51-15}
&\frac{\d}{\d t} \left( \begin{array}{c}
(y(t),\varphi_1)\\
(y(t),\varphi_2)\\
\cdots\\
(y(t),\varphi_N)
\end{array}
\right) + \left(
\begin{array}{cccc}
\lambda_1 & 0 &\cdots & 0\\
0 & \lambda_2 & \cdots & 0\\
\vdots & \vdots & \cdots &\vdots\\
0 & 0& \cdots &  \lambda_N
\end{array}
\right) \left(
\begin{array}{c}
(y(t),\varphi_1)\\
(y(t),\varphi_2)\\
\cdots\\
(y(t),\varphi_N)
\end{array}
\right)\\
&
= \mathbb{B} \left(
\begin{array}{c}
(y(t),\varphi_1)\\
(y(t),\varphi_2)\\
\cdots\\
(y(t),\varphi_N)
\end{array}
\right),\;\;t\geq 0,
\end{align}
where
$$
\mathbb{B} =
\rho \left(
\begin{array}{cccc}
(\varphi_1,\varphi_1)_E & (\varphi_1,\varphi_2)_E & \cdots & (\varphi_1,\varphi_N)_E\\
(\varphi_2,\varphi_1)_E & (\varphi_2,\varphi_2)_E & \cdots & (\varphi_2,\varphi_N)_E\\
\vdots & \vdots & \cdots & \vdots\\
(\varphi_N,\varphi_1)_E & (\varphi_N,\varphi_2)_E & \cdots & (\varphi_N,\varphi_N)_E
\end{array}
\right) \mathbb{A}^{-1} =
\left(
\begin{array}{cccc}
\rho & 0 &\cdots & 0\\
0 & \rho & \cdots & 0\\
\vdots & \vdots & \cdots &\vdots\\
0 & 0& \cdots &  \rho
\end{array}
\right).
$$
Since $\rho= \lambda_1-1$, it follows from \eqref{equ-51-15} that
\begin{align}\label{equ-51-16}
&\frac{\d}{\d t} \left( \begin{array}{c}
(y(t),\varphi_1)\\
(y(t),\varphi_2)\\
\cdots\\
(y(t),\varphi_N)
\end{array}
\right)\\
&
= \left(
\begin{array}{cccc}
-1 & 0 &\cdots & 0\\
0 & -1+\lambda_1-\lambda_2 & \cdots & 0\\
\vdots & \vdots & \cdots &\vdots\\
0 & 0& \cdots &  -1+\lambda_1-\lambda_N
\end{array}
\right) \left(
\begin{array}{c}
(y(t),\varphi_1)\\
(y(t),\varphi_2)\\
\cdots\\
(y(t),\varphi_N)
\end{array}
\right),\;\;t\geq 0.
\end{align}
Let $P_{\leq N}$ be the projection operator defined by
$$
P_{\leq N}\varphi := \sum_{1\leq j\leq N}(\varphi,\varphi_j) \varphi_j,\quad\;\;\varphi\in L^2(\mathbb{R}^n).
$$
Write
$$
P_{>N}:=I-P_{\leq N},\;\;\mbox{with}\;\;I\;\mbox{the identity operator on}\;\;L^2(\mathbb{R}^n).
$$
Since $\lambda_1\leq \lambda_j$, when $2\leq j\leq N$, we get from \eqref{equ-51-16} that
\begin{align}\label{equ-51-17}
\|P_{\leq N}y(t)\|_{L^2(\R^n)}\leq e^{-t}\|P_{\leq N}y(0)\|_{L^2(\R^n)}, \quad \mbox{when}\;\; t\geq 0.
\end{align}

It remains to estimate   $\| P_{> N} y(t,\cdot)\|_{L^2(\R^n)}$.
To this end,
we take the inner product (in $L^2(\R^n)$)
in  \eqref{equ-51-heat} with ${(y(t),\varphi_j)}\varphi_j$
to see that when  $j\geq N+1$,
\begin{align}\label{equ-54-12}
\frac{1}{2}\frac{\d}{\d t}|(y(t),\varphi_j)|^2 +\lambda_{j}|(y(t),\varphi_j)|^2 =  {(y(t),\varphi_j)}(Ky(t),\varphi_j),\;\;t\geq 0.
\end{align}
Meanwhile, by the Cauchy-Schwartz inequality, we get that when  $j\geq N+1$,
\begin{align}\label{5.31-6-7wang}
|  {(y(t),\varphi_j)}(Ky(t),\varphi_j)|
\leq \frac{\lambda_j}{2}|(y(t),\varphi_j)|^2
+ \frac{1}{2\lambda_j}|(Ky(t),\varphi_j)|^2,\;\;t\geq 0.
\end{align}
Taking the sum in \eqref{equ-54-12} over $j\geq N+1$, using \eqref{5.31-6-7wang}
and
the fact $\lambda_j\geq \lambda_{N+1}$, we infer
\begin{eqnarray}\label{equ-54-13}
 &&\frac{\d}{\d t}\|P_{>N}y(t)\|_{L^2(\R^n)}^2
  +\lambda_{N+1}\|P_{>N}y(t)\|_{L^2(\R^n)}^2\nonumber\\
  &\leq& \frac{1}{\lambda_{N+1}} \|P_{> N}Ky(t)\|^2_{L^2(\R^n)}\nonumber\\
 &\leq& \frac{1}{\lambda_{N+1}}(|\rho|\|\mathbb{A}^{-1}\|)^2\|P_{\leq N}y(t)\|_{L^2(\R^n)}^2,
 \;\;t\geq 0.
\end{eqnarray}
(In  the last step of \eqref{equ-54-13}, we used  \eqref{equ-54-10}.) Applying the Gronwall inequality to \eqref{equ-54-13} and   using   \eqref{equ-51-17}, we deduce  that when $t\geq 0$,
\begin{align}\label{equ-54-14}
\|P_{>N}y(t)\|_{L^2(\R^n)}^2
 & \leq e^{-\lambda_{N+1}t} \|P_{>N}y(0)\|_{L^2(\R^n)}^2 \nonumber \\
& ~ + \frac{1}{\lambda_{N+1}}(|\rho|\|\mathbb{A}^{-1}\|\|P_{\leq N}y(0)\|_{L^2(\R^n)})^2 \int_0^t e^{-\lambda_{N+1}(t-s)}e^{-2s}\d s\nonumber\\
 &\leq e^{-\lambda_{N+1}t} \|P_{>N}y(0)\|_{L^2(\R^n)}^2
  + C_1e^{-C_2t}\|P_{\leq N}y(0)\|^2_{L^2(\R^n)}
\end{align}
for some  $C_1,C_2>0$. By \eqref{equ-51-17} and \eqref{equ-54-14}, we obtain
$$
\|y(t)\|_{L^2(\R^n)}^2\leq C_3e^{-C_4t}\|y(0)\|^2_{L^2(\R^n)}, \quad t\geq 0
$$
for some $C_3,C_4>0$. This leads to {\it Assertion 2}.

Thus, we complete the proof of Theorem \ref{thm-general-2}.

\section*{Acknowledgment}
The authors would like to thank  Dr. Can Zhang for very useful discussions on observability of fractional heat equations.

S. Huang was supported by the National Natural Science Foundation of China under grants 11801188
and 11971188.

G. Wang was partially supported by the National Natural Science Foundation of China under grants 11971022 and 11926337.

M. Wang was supported by the National Natural Science Foundation of China under grant No. 11701535.


\begin{thebibliography}{99}








\bibitem{AB} P. Alphonse, J. Bernier, Smoothing properties of fractional Ornstein-Uhlenbeck semigroups and null-controllability.  arXiv preprint  arXiv:	arXiv:1810.02629, (2018).

\bibitem{AEWZ} J. Apraiz, L Escauriaza, G. Wang, C. Zhang,   Observability inequalities and measurable sets.
J. Eur. Math. Soc., \textbf{16} (2014) 2433--2475.

\bibitem{BT} V. Barbu, R. Triggiani, Internal stabilization of Navier-Stokes euations with finite dimensional
controllers. Indiana Univ. Math. J., \textbf{53} (2004), 1443--1494.

\bibitem{B} V. Barbu,  Stabilization of Navier-Stokes Flows, Springer, New York, 2010.

\bibitem{BJP} K. Beauchard, P. Jaming,  K. Pravda-Starov,  Spectral inequality for finite combinations of Hermite functions and null-controllability of hypoelliptic quadratic equations.   arXiv preprint  arXiv:1804.04895, (2018).

\bibitem{BP} K. Beauchard, K. Pravda-Starov,   Null-controllability of hypoelliptic quadratic
differential equations. J\'{E}c. polytech. Math., \textbf{5} (2018) 1--43.


\bibitem{BS} F. A. Berezin, M. A. Shubin, The Schr\"{o}dinger equation, Mathematics and its Applications, Kluwer Academic Publishers \textbf{66} (1991).

\bibitem{BG} R. M. Blumenthal, R. K. Getoor, Some theorems on stable processes. Trans. Amer. Math. Soc., \textbf{95} (1960) 263--273.

\bibitem{BD}  A. Bonami, B. Demange, A survey on uncertainty principles related to quadratic forms. Collect. Math., (2006) 1--36.

    \bibitem{coron} J. M. Coron, Control and Nonlinearity. American Mathematical Society, Vol. 136.
    Methematical Surveys and Monographs, Providence (2007).

\bibitem{DWZ} Y. Duan, L. Wang,  C. Zhang, Observability inequalities for the heat equation with bounded potentials on the whole space.  arXiv preprint arXiv:1910.04340 (2019)

\bibitem{DM} T. Duyckaerts, L.  Miller,  Resolvent conditions for the control of parabolic equations. J. Funct. Anal., \textbf{263} (2012) 3641--3673

\bibitem{EV18} M. Egidi, I. Veseli\'{c}, Sharp geometric condition for null-controllability of the heat equation on $\mathbb R^d$ and consistent estimates on the control cost. Arch. Math., \textbf{111} (2018) 85--99.



\bibitem{luis1} L. Escauriaza, S. Montaner, C. Zhang, Observation from mearurable sets for parabolic analytic
evolutions and applications. J. Math. Pure et Appl. \textbf{104} (2015) 837-867.

\bibitem{luis2} L. Escauriaza, S. Montaner, C. Zhang, Analyticity of solutions to parabolic evolutions and applications. SIAM J. Math. Annl. \textbf{49} (2017) 4064-4092.




\bibitem{GST} D. Gallaun, C. Seifert, M. Tautenhahn, Sufficient criteria and sharp geometric conditions for observability in Banach spaces. arXiv preprint arXiv:1905.10285v1, (2019).

\bibitem{HJ}  V. Havin, B. J\"{o}ricke,   The Uncertainty Principle in Harmonic Analysis. Springer Science and Business Media, 2012.

\bibitem{Ko} A. Koenig, Non-null-controllability of the fractional heat
equation and of the Kolmogorov equation. arXiv preprint 	arXiv:1804.10581,  (2018).

\bibitem{Kov} O. Kovrijkine, Some results related to the Logvinenko-Sereda Theorem. Proc. Amer. Math. Soc., \textbf{129}
(2001) no. 10, 3037--3047.

\bibitem{LT} I. Lasiecka, R. Triggiani, Control theory for partial differential equations: continuous and approximation theories, \uppercase\expandafter{\romannumeral1}: Abstract parabolic systems, Encyclopedia of Mathematics and its Applications 74, Cambridge University Press, 2000.


\bibitem{LM19} G. Lebeau, I. Moyano, Spectral Inequalities for the Schr\"{o}dinger operator. arXiv preprint    arXiv:1901.03513v1, (2019).

\bibitem{LR} G. Lebeau, L. Robbiano,   Contr\^{o}le exact de l\'{e}quation de la chaleur. Commun. Partial Diff.
Eq., \textbf{20} (1995) 335--356.

\bibitem{LZ} G. Lebeau, E., Zuazua, Null-controllability of a system of linear thermoelasticity, Arch. Rational Mech. Anal., \textbf{141} (1998) 297-329.

\bibitem{Li} P. Lissy,  A non-controllability result for the half-heat equation on the whole line based on the
prolate spheroidal wave functions and its application to the Grushin equation. 2020. hal-02420212v2.

\bibitem{Lu} Q. L\"{u}, Bang-bang principle of time optimal controls and null controllability of fractional order
parabolic equations. Acta Math. Sin. (Engl. Ser.) \textbf{26} (2010) 2377--2386.

\bibitem{LW} Q. L\"{u},  G. Wang,  On the existence of time optimal controls with constraints of the rectangular type for heat equations. SIAM J. Control Optim.  \textbf{49} (2011) 1124--1149.
\bibitem{MZ01a} S. Micu, E. Zuazua,  On the lack of null-controllability of the heat equation on the half-line. Trans. Amer. Math. Soc., \textbf{353} (2001) 1635--1659.

\bibitem{MZ01b} S. Micu, E. Zuazua, On the lack of null-controllability of the heat equation on the half space. Port. Math., \textbf{58} (2001) 1--24.


\bibitem{MZ} S. Micu,  E. Zuazua, On the controllability of a fractional order parabolic equation. SIAM J. Control Optim., \textbf{44} (2006) no. 6,  1950--1972.

\bibitem{Mi05} L. Miller, On the null-controllability of the heat equation in unbounded domains. B. Sci.  math., \textbf{129} (2005) 175--185.


\bibitem{Mi06} L. Miller, On the controllability of anomalous diffusions generated by the fractional Laplacian.  Math. Control Signals Syst., \textbf{18} (2006) no. 3, 260--271.

\bibitem{M09} L.  Miller,   Unique continuation estimates for sums of semiclassical eigenfunctions and null-controllability from cones. Preprint, (2009)


\bibitem{M10} L.  Miller,  A direct Lebeau-Robbiano strategy for the observability of heat-like semigroups.
Discrete Contin. Dynam. Systems \textbf{14} (2010) no. 4,  1465--1485.



\bibitem{PW} K. D. Phung,  G. Wang, An observability estimate for parabolic equations from a measurable
set in time and its applications. J. Eur. Math. Soc., \textbf{15} (2013)  681--703.

\bibitem{trelatZ} Y. Privat,  E. Tr\'{e}lat, E. Zuazua, Optimal shape and location of
sensors for parabolic equations with random initial data. Arch. Rational Mech. Anal.
\textbf{216} (2015) 921-981.

\bibitem{RS} M. Reed, B. Simon, Methods of modern mathematical physics, vol. \uppercase\expandafter{\romannumeral2}:
Fourier analysis and self-adjointness. Academic Press, New York, 1975.

\bibitem{SS80}  M. Schechter, B. Simon, Unique continuation for Schr\"{o}dinger operators with unbounded potential. J. Math. Anal. Appl., \textbf{77} (1980) 482-492.

\bibitem{Simon} B. Simon, Schr\"{o}dinger semigroups. Bull. Amer. Math. Soc., \textbf{7 }(1982) 447--526.

\bibitem{TT}  G. Tenenbaum,  M. Tucsnak,  On the null-controllability of diffusion equations.
ESAIM Control Optim. Calc. Var., \textbf{17} (2011) no. 4,  1088--1100.

\bibitem{Ti} B. C. Titchmarsh,   Eigenfunction expansions associated with second order differential operators,
Part \uppercase\expandafter{\romannumeral1}. Second edition. Oxford: Oxford Univ. Press (1969).


\bibitem{TWX} E. Tr\'{e}lat, G. Wang,  Y. Xu,   Characterization by observability inequalities of controllability and stabilization properties. Pure and Applied Analysis \textbf{2} (2020) 93--122.




\bibitem{Wang} G. Wang,  $L^\infty$-null controllability of the heat equation and its consequence for the time optimal control problem.  SIAM J. Control Optim. \textbf{47} (2008) 1701--1720.

\bibitem{WWZZ} G. Wang, M.  Wang, C. Zhang, Y. Zhang, Observable set, observability, interpolation inequality and spectral inequality for the heat equation in $\mathbb{R}^n$. J. Math. Pures  Appl.,  \textbf{126} (2019) 144--194.

    \bibitem{yashan} G. Wang, Y. Xu, Periodic Feedback Stabilization for Linear Periodic
    Evolution Equations. Springer Briefs in Mathematics, Springer, (2016).

\bibitem{Z} J. Zabczyk, Mathematical control theory: an introduction, Birkh\"{a}user, Boston, MA, 1995.

\end{thebibliography}
\end{document}